\newtheorem{theorem}{Theorem}[section]
\newtheorem*{theorem*}{Theorem}
\newtheorem{lemma}[theorem]{Lemma}
\newtheorem{proposition}[theorem]{Proposition}
\newtheorem*{proposition*}{Proposition}
\newtheorem{remark}[theorem]{Remark}
\begin{document}

\title{Titchmarsh--Weyl formula for the \\spectral density of a class of \\Jacobi matrices in the critical case}
\date{}
\author
    {
        Serguei Naboko\thanks
        {
            Department of Mathematical Physics, Institute of Physics, Saint-Petersburg State University, Ulianovskaia 1, St. Petergoff, Saint-Petersburg, 198904 Russia
            sergey.naboko@gmail.com
        }
        \and
        Sergey Simonov\thanks
        {
            St. Petersburg Department of V. A. Steklov Institute of Mathematics of the Russian Academy of Sciences, Fontanka 27, St. Petersburg, 191023 Russia;
            St. Petersburg State University, Universitetskaya nab. 7--9, St. Petersburg, 199034 Russia.
            sergey.a.simonov@gmail.com
        }
    }
\maketitle
\numberwithin{equation}{section}

    \begin{abstract}
    We consider a class of Jacobi matrices with unbounded entries in the so called critical (double root, Jordan box) case. We prove a formula for the spectral density of the matrix which relates its spectral density to the asymptotics of orthogonal polynomials associated with the matrix.
    \end{abstract}

\noindent{\bf Keywords:} Jacobi matrix, generalized eigenvector, orthogonal polynomials, Titchmarsh--Weyl theory, Levinson theorem, asymptotics, spectral density

\noindent{\bf AMS MSC:} 47B36, 34E10

\section{Introduction}

In the paper \cite{Aptekarev-Geronimo-2016} A. I. Aptekarev and J. S. Jeronimo for a class of Jacobi matrices with unbounded entries found a formula for the spectral density in terms of asymptotics of the orthogonal polynomials associated with the matrix. Their class is defined by certain summability conditions for the coefficients and is an example of the situation which can be called the {\it non-critical case}. The {\it critical case} for Jacobi matrices with unbounded entries was studied in several papers, among them \cite{Damanik-Naboko-2007,Janas-2006,Janas-Naboko-Sheronova-2009,Naboko-Simonov-2010}. The distinction between the cases is determined by the limit (whenever it exists) of the transfer-matrix of the eigenvector equation: in the non-critical case it is di\-a\-go\-na\-li\-zable, while in the critical case it is similar to a Jordan box. Note that for the special case of discrete Schr\"odinger operator with fast decreasing potential (the main diagonal) we have the critical situation only for two values of the spectral parameter $\lambda=\pm2$. However, for Jacobi matrices in the critical case we have the similar situation for all values of the spectral parameter $\lambda$. The types of asymptotics of generalized eigenvectors differ significantly in these two cases: critical and non-critical. In the papers dealing with the critical case mentioned above only asymptotics of the generalized eigenvectors were studied, and one cannot find an analog of the formula for the spectral density from \cite{Aptekarev-Geronimo-2016} for the critical case. The aim of the present paper is to fill this gap.

We consider a class of Jacobi matrices in the critical case which is an extension of the class studied in \cite{Janas-Naboko-Sheronova-2009}. Besides finding the asymptotics of generalized eigenvectors (in a more general case) we obtain a formula for the spectral density of the matrix in terms of the asymptotics of its orthogonal polynomials. Namely, we consider matrices
\begin{equation}\label{Jacobi matrix}
    \mathcal J=
    \left(%
    \begin{array}{ccccc}
    b_1 & a_1 & 0 & 0 & \cdots \\
    a_1 & b_2 & a_2 & 0 &\cdots \\
    0 & a_2 & b_3 & a_3 &\cdots \\
    0 & 0 & a_2 & b_4 & \cdots \\
    \vdots & \vdots & \vdots & \vdots & \ddots \\
    \end{array}%
    \right)
\end{equation}
with entries
\begin{equation}\label{entries}
    a_n=n^{\alpha}+p_n,\quad b_n=-2n^{\alpha}+q_n,
\end{equation}
where $a_n>0$, $b_n\in\mathbb R$,
\begin{equation*}
    \alpha\in(0,1)
\end{equation*}
and
\begin{equation}\label{p-q}
    \left\{\frac{p_n}{n^{\frac{\alpha}2}}\right\}_{n=1}^{\infty},\left\{\frac{q_n}{n^{\frac{\alpha}2}}\right\}_{n=1}^{\infty}\in l^1.
\end{equation}
The operators of this family act in the Hilbert space $l^2(\mathbb N)$ and have the following spectral properties: the spectrum on the right half-line is pure point, the spectrum on the left half-line is purely absolutely continuous. This follows from the asymptotics of the generalized eigenvectors by the subordinacy theory \cite{Gilbert-Pearson-1987,Khan-Pearson-1992}. For $\alpha\in(\frac12,\frac23)$  such a family was considered in \cite{Janas-Naboko-Sheronova-2009} and for $\lambda\in\mathbb R\backslash\{0\}$ asymptotics of generalized eigenvectors were found. In the present paper to find asymptotics we use a modified version of the remarkable method of R.-J. Kooman \cite{Kooman-2007} which can yield the result for all complex $\lambda$ in the same manner and works for $\alpha\in(0,1)$. In essence, the Kooman's method is based on a transformation which reduces the discrete linear system to the Levinson (L-diagonal) form and can be considered as an extension of Benzaid--Harris--Lutz methods \cite{Benzaid-Lutz-1987,Harris-Lutz-1975}, see also \cite{Bodine-Lutz-2015,Janas-Moszynski-2003,Silva-2007}. All of them are based on the discrete analog of the Levinson asymptotic theorem \cite[Theorem 8.1]{Coddington-Levinson-1955}.

We call a formula, which relates the spectral density of an ordinary dif\-fe\-ren\-tial or dif\-fe\-rence operator to the coefficient in asymptotics of the solution of the eigenvector equation which satisfies the boundary condition, the {\it Titchmarsh--Weyl formula} by analogy with the classical case of the Schr\"odinger operator on the half-line with a self-adjoint boundary condition at zero and a summable potential, see the book of E. C. Titchmarsh \cite[Ch. V, 5.6]{Titchmarsh-1962}.

In this study of formulas for the spectral density we have one application in mind, namely, the phenomenon of spectral phase transition. If a family of self-adjoint operators depends on one or several real parameters, it can happen that the space of these parameters is divided into regions where the operators have similar spectral structures. For example, in some regions the spectrum can be purely absolutely con\-ti\-nu\-ous, in others it can be discrete. Then on the boundaries of the regions happens a {\it spectral phase transition}. We want to see by explicit examples how such transitions happen in terms of the spectral measure. Some examples of spectral phase transitions can be found in the papers \cite{Janas-Naboko-2001, Janas-Naboko-2002, Simonov-2007}. In these works only the ``geometry'' and type of the spectrum were considered for the lack of suitable methods to analyze the spectral measure. This is where formulas for the spectral density could be useful. Several papers were devoted to establishing such formulas in both discrete and continuous cases, \cite{Janas-Simonov-2010,Kurasov-Simonov-2014,Simonov-2009}. That analysis has been used to study the behavior of the spectral density of discrete \cite{Simonov-2012} and differential \cite{Naboko-Simonov-2012,Simonov-2016} Schr\"odinger operators with the Wigner--von Neumann potential near the critical points which appear due to that form of the potential. These formulas were derived for special classes of operators, and, moreover, for the non-critical case. The example that we consider in the present paper is taken from a family of Jacobi matrices demonstrating a spectral phase transition, in the case when the parameters belong the boundary of two regions (a line in that case). On that boundary ($d=\pm1$, see \eqref{app b limits of entries}) the Jacobi matrices are in the critical case. The formula for the spectral density in this case is needed as the first step to understanding the ``inner structure'' of the spectral phase transition in this family.

We should mention interesting recent works by G. \'Swiderski \cite{Swiderski-2016,Swiderski-Trojan-2017,Swiderski-2017,Swiderski-2018} also devoted, in particular, to the study of the spectral density. However, these considerations did not include the Jordan box case.

The paper is organized as follows. In Section \ref{section prelim} we recall some basic notions and facts related to Jacobi matrices and their generalized eigenvector equations and also explain why in our situation the critical case occurs. In Section \ref{section stabilized} we describe the idea of approximating the matrix $\mathcal J$ by the ``stabilized'' matrices $\mathcal J_N$, also used in \cite{Aptekarev-Geronimo-2016}, which allows to find the spectral density of $\mathcal J$ exploiting the $*$-weak convergence of spectral measures. It contains two elementary propositions to be used in the proof of the main theorem. In Section \ref{section main} the main result of the paper (Theorem \ref{main theorem}) is proved: the formula for the spectral density of $\mathcal J$ in the critical case. The paper contains two appendices. In Appendix A we give a proof of the formula for the spectral density of the ``stabilized'' matrix adjusted to our form of it. And finally, in Appendix B we revisit the result of Aptekarev--Geronimo \cite{Aptekarev-Geronimo-2016} that gives a formula for the spectral density for a class of unbounded Jacobi matrices in the non-critical case. In the second appendix the proof uses the similar technique which was elaborated for the more complicated critical case.

\section{Preliminaries}\label{section prelim}

For the  complete definitions of a Jacobi matrix, orthogonal polynomials associated to it and its Weyl function in the limit-point case we refer to the book of N. I. Akhiezer \cite{Akhiezer-1965}.

The operator $\mathcal J$, defined by the matrix \eqref{Jacobi matrix}--\eqref{p-q}, according to the Carleman condition \cite{Akhiezer-1965} is self-adjoint in $l^2(\mathbb N)$. If $\mathcal E$ is its projection-valued spectral measure, $\{e_n,\ n\in\mathbb N\}$ is the standard basis in $l^2(\mathbb N)$, then $\rho=(\mathcal Ee_1,e_1)$ is its scalar {\it spectral measure} and $\rho'$, which exists and is finite a.e., is its {\it spectral density}. By $m$ we denote the {\it Weyl function} for which the following relations hold:
\begin{equation}\label{}
    m(\lambda)=\int_{\mathbb R}\frac{d\rho(x)}{x-\lambda},\quad\lambda\in\mathbb C\backslash\mathbb R,
\end{equation}
\begin{equation}\label{}
    \rho'(\lambda)=\frac1{\pi}{\rm Im\,}m(\lambda+i0),\quad\text{a. a. }\lambda\in\mathbb R.
\end{equation}
Orthogonal polynomials of the first and the second kind, $P_n(\lambda)$ and $Q_n(\lambda)$, respectively, are solutions of the eigenvector equation
\begin{equation}\label{eigenvector equation}
    a_{n-1}u_{n-1}+b_nu_n+a_nu_{n+1}=\lambda u_n,\quad n\geqslant 2,
\end{equation}
and have the initial values $P_1(\lambda)=1$, $P_2(\lambda)=\frac{b_1-\lambda}{a_1}$, $Q_1(\lambda)=0$, $Q_2(\lambda)=\frac{1}{a_1}$. For $\lambda\in\mathbb C\backslash\mathbb R$ their linear combination $Q_n(\lambda)+m(\lambda)P_n(\lambda)$ is the only (up to multiplication by a constant) solution of \eqref{eigenvector equation} which belongs to $l^2(\mathbb N)$. The weighted {\it Wronskian} of two solutions $u$ and $v$ of the eigenvector equation \eqref{eigenvector equation} is defined as
\begin{equation}\label{Wronskian}
    W\{u,v\}:=a_n(u_nv_{n+1}-u_{n+1}v_n),\quad n\in\mathbb N,
\end{equation}
being independent of $n$.

The eigenvector equation \eqref{eigenvector equation} can be written in the vector form:
\begin{equation}\label{}
    \vec u_n:=
    \left(%
    \begin{array}{c}
      u_{n-1} \\
      u_n \\
    \end{array}%
    \right),
    \quad
    B_n(\lambda):=
    \left(%
    \begin{array}{cc}
      0 & 1 \\
      -\frac{a_{n-1}}{a_n} & \frac{\lambda-b_n}{a_n} \\
    \end{array}%
    \right),
\end{equation}
\begin{equation}
    \vec u_{n+1}=B_n(\lambda)\vec u_n
    ,\quad n\geqslant 2.
\end{equation}
The matrix $B_n$ is called the {\it transfer-matrix} for the equation \eqref{eigenvector equation}.

In our case, \eqref{Jacobi matrix}--\eqref{p-q}, the transfer-matrix for every $\lambda$ has the limit
\begin{equation}\label{limit matrix}
    \left(%
    \begin{array}{cc}
      0 & 1 \\
      -1 & 2 \\
    \end{array}%
    \right).
\end{equation}
The eigenvalues of this limit matrix coincide, and, by analogy to the case of constant coefficients, when the roots of the characteristic equation are the same as the eigenvalues of the transfer-matrix, this is called the {\it double root case}, or {\it critical case}. The matrix \eqref{limit matrix} is not proportional to the identity, so it is not diagonalizable and is similar to a Jordan box. For this reason our situation can be also called the {\it Jordan box case}. Asymptotic analysis of solutions can get involved in this case, and several papers were devoted to studying examples of such Jacobi matrices, among them \cite{Damanik-Naboko-2007,Janas-2006,Janas-Naboko-Sheronova-2009,Naboko-Simonov-2010}.

\section{The stabilized matrix}\label{section stabilized}

In this section let $\mathcal J$ be a Jacobi matrix
\begin{equation}\label{general Jacobi matrix}
    \mathcal J=
    \left(%
    \begin{array}{ccccc}
    b_1 & a_1 & 0 & 0 & \cdots \\
    a_1 & b_2 & a_2 & 0 &\cdots \\
    0 & a_2 & b_3 & a_3 &\cdots \\
    0 & 0 & a_3 & b_4 & \cdots \\
    \vdots & \vdots & \vdots & \vdots & \ddots \\
    \end{array}%
    \right),
\end{equation}
with arbitrary sequences $\{a_n\}_{n=1}^{\infty},\{b_n\}_{n=1}^{\infty}$ of positive and real numbers, respectively. Consider the bounded matrix $\mathcal J_N$ which has the sequence $a_1,a_2,...,a_{N-1},a_N,a_N,a_N,...$ of off-diagonal entries and the sequence $b_1,b_2,...,b_{N-1},b_N,b_N,b_N,...$ on the main diagonal:
\begin{equation}\label{J-N}
    \mathcal J_N=
    \left(%
    \begin{array}{ccccccc}
    b_1 & a_1 & 0 & \cdots & 0 & 0 & \vdots \\
    a_1 & b_2 & a_2 & \cdots & 0 &0 & \vdots \\
    0 & a_2 & \ddots & \ddots & 0 & 0 & \vdots \\
    \vdots & \vdots & \ddots & b_N & a_N & 0 & \vdots \\
    0 & 0 & 0 & a_N & b_N & a_N & \vdots \\
    0 & 0 & 0 & 0 & a_N & b_N & \ddots \\
    \vdots & \vdots & \vdots & \vdots & \vdots & \ddots & \ddots \\
    \end{array}%
    \right).
\end{equation}
The matrix $\mathcal J_N$ is a scaled and shifted discrete Schr\"odinger operator perturbed by finitely supported sequences of diagonal and off-diagonal entries (finite rank perturbation). It is known that its spectrum is purely absolutely continuous on the interval $[b_N-2a_N,b_N+2a_N]$, which follows, e.g., from the subordinacy theory \cite{Khan-Pearson-1992}, and discrete and finite on the rest of the real line. One can write an explicit formula for its spectral density in terms of the orthogonal polynomials for the original matrix $\mathcal J$.

    \begin{proposition}\label{prop density of J-N}
     Consider a Jacobi matrix $\mathcal J$ given by \eqref{general Jacobi matrix} with some sequences $\{a_n\}_{n=1}^{\infty}$ of positive numbers and $\{b_n\}_{n=1}^{\infty}$ of real numbers. Let  $\mathcal J_N$ be defined by \eqref{J-N}. Then its spectral density is
    \begin{equation}\label{density of J-N}
        \rho'_N(\lambda)=\frac{\sqrt{1-\left(\frac{\lambda-b_N}{2a_N}\right)^2}}{\pi a_N|P_{N+1}(\lambda)-z_N(\lambda)P_N(\lambda)|^2},\quad\lambda\in(b_N-2a_N,b_N+2a_N),
    \end{equation}
    where $\{P_n(\lambda)\}_{n=1}^{\infty}$ are the orthogonal polynomials of the first kind associated with the matrix $\mathcal J$ and
    \begin{equation}\label{z-N}
        z_N(\lambda)=\frac{\lambda-b_N}{2a_N}- i\sqrt{1-\left(\frac{\lambda-b_N}{2a_N}\right)^2}
    \end{equation}
    is the boundary value of the analytic branch such that $|z_N(\lambda)|<1$ for $\lambda\in\mathbb C\backslash[b_N-2a_N,b_N+2a_N]$.
    \end{proposition}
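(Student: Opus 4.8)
\emph{Proof idea.} The plan is to compute the Weyl function $m_N$ of $\mathcal J_N$ in closed form and then read off its boundary values, using $\rho'_N=\frac1\pi\mathrm{Im}\,m_N(\cdot+i0)$. The matrix $\mathcal J_N$ is bounded and self-adjoint, hence automatically in the limit-point case, so for $\mathrm{Im}\,\lambda>0$ there is a unique-up-to-scalar square-summable solution $\psi$ of the eigenvector equation for $\mathcal J_N$, namely $\psi=Q+m_N P$ with $P,Q$ the polynomials of the first and second kind of $\mathcal J_N$. First I would note that rows $1,\dots,N$ of that equation coincide with the corresponding rows for $\mathcal J$: the entries $a_n,b_n$ agree for $n\le N$, and the $N$-th row of each matrix carries $a_{N-1},b_N,a_N$. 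Consequently $P_n$ and $Q_n$ for $\mathcal J_N$ equal those for $\mathcal J$ for all $n\le N+1$, so the polynomials in \eqref{density of J-N} are indeed the ones associated with $\mathcal J$.

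Next I would pin down $\psi$ near infinity. For $n\ge N+1$ the equation \eqref{eigenvector equation} (with the coefficients of $\mathcal J_N$) reduces to the constant-coefficient recurrence $u_{n-1}+u_{n+1}=\frac{\lambda-b_N}{a_N}u_n$, whose solutions, regarded as sequences indexed by $n\ge N$, span the two-dimensional space $\mathrm{span}\{(z_N^{\,n})_{n\ge N},(z_N^{-n})_{n\ge N}\}$, where $z_N=z_N(\lambda)$ is the root of $z+z^{-1}=\frac{\lambda-b_N}{a_N}$ inside the unit disk (note $z_N\ne\pm1$ for $\lambda$ off the real axis or in the open band). Only the first lies in $l^2$, so $\psi_n$ is a constant multiple of $z_N^{\,n}$ for $n\ge N$; in particular $\psi_{N+1}=z_N\psi_N$. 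Substituting $\psi=Q+m_NP$ and solving the scalar relation $Q_{N+1}+m_NP_{N+1}=z_N(Q_N+m_NP_N)$ gives
\[
 m_N(\lambda)=\frac{z_N(\lambda)\,Q_N(\lambda)-Q_{N+1}(\lambda)}{P_{N+1}(\lambda)-z_N(\lambda)\,P_N(\lambda)}.
\]
For $\mathrm{Im}\,\lambda>0$ the denominator cannot vanish, and neither can its boundary value on the band: $P_N,P_{N+1}$ are real there while $\mathrm{Im}\,z_N\ne0$, so a zero would force $P_N(\lambda)=P_{N+1}(\lambda)=0$, contradicting $a_N(Q_NP_{N+1}-Q_{N+1}P_N)=W\{Q,P\}\ne0$.

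It remains to take boundary values on $(b_N-2a_N,b_N+2a_N)$. There the branch prescription \eqref{z-N} gives $z_N=\frac{\lambda-b_N}{2a_N}-i\sqrt{1-\left(\frac{\lambda-b_N}{2a_N}\right)^2}$, so $|z_N|=1$ and $\mathrm{Im}\,z_N=-\sqrt{1-\left(\frac{\lambda-b_N}{2a_N}\right)^2}$. Multiplying numerator and denominator of $m_N$ by $\overline{P_{N+1}-z_NP_N}$ and using that $P_N,P_{N+1},Q_N,Q_{N+1}$ are real and $z_N\overline{z_N}=1$, the real quantities $Q_NP_N$ and $Q_{N+1}P_{N+1}$ drop out of the imaginary part, leaving $\mathrm{Im}\,m_N=\frac{(\mathrm{Im}\,z_N)(Q_NP_{N+1}-Q_{N+1}P_N)}{|P_{N+1}-z_NP_N|^2}$. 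Since the weighted Wronskian $W\{Q,P\}=a_n(Q_nP_{n+1}-Q_{n+1}P_n)$ is independent of $n$ and equals $a_1(Q_1P_2-Q_2P_1)=-1$ at $n=1$, we get $Q_NP_{N+1}-Q_{N+1}P_N=-\frac1{a_N}$; substituting this and the value of $\mathrm{Im}\,z_N$, and dividing by $\pi$, yields exactly \eqref{density of J-N}.

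The one genuinely delicate point is the branch identification: one must check that for $\mathrm{Im}\,\lambda>0$ the root of $z+z^{-1}=\frac{\lambda-b_N}{a_N}$ strictly inside the unit disk is the one producing the $l^2$ solution (the correct Herglotz branch), and that its boundary value on the band has the negative imaginary part in \eqref{z-N}. Everything else — coincidence of the polynomials up to index $N+1$, solving a $2\times2$ linear relation for $m_N$, and the Wronskian bookkeeping — is routine. One should also invoke the cited subordinacy argument that $(b_N-2a_N,b_N+2a_N)$ is precisely where $\mathcal J_N$ has absolutely continuous spectrum, so that this is exactly the interval on which $\rho'_N$ is positive and given by the formula.
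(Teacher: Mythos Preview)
Your proof is correct and follows essentially the same route as the paper's Appendix~A: identify the $l^2$ solution of the eigenvector equation for $\mathcal J_N$ as the one proportional to $z_N^n$ for $n\ge N$, use this to express $m_N$ as a ratio involving $P_N,P_{N+1},Q_N,Q_{N+1}$, and then extract the imaginary part on the band via the Wronskian identity $W\{P,Q\}=1$. The only cosmetic difference is that the paper names the intermediate Jost-type coefficients $\Phi_N,\Theta_N$ (so that $m_N=-\Theta_N/\Phi_N$), whereas you go directly to the scalar relation $\psi_{N+1}=z_N\psi_N$; the algebra is the same.
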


This result is analogous to the classical Titchmarsh--Weyl formula for the differential Schr\"o\-din\-ger operator on the half-line with summable potential and is more or less well-known. A version of it is contained in \cite{Aptekarev-Geronimo-2016}, but because of different numbering of entries the stabilized matrix is defined slightly differently there, and hence we cannot literally use that formulation. Proposition \ref{prop density of J-N} therefore needs a separate proof which we provide in Appendix A. A much more general version, for the sequences of $\{a_n\}_{n=1}^{\infty}$ and $\{b_n\}_{n=1}^{\infty}$ of bounded variation, can be found in \cite{Mate-Nevai-Totik-1985}, see also \cite{Van Assche-Geronimo-1988}.

Stabilized matrices approximate the original matrix as $N\to\infty$, so knowing the spectral density of $\mathcal J_N$ we can pass to the limit and find the spectral density of $\mathcal J$. The next two propositions specify the exact sense of this limit passage, both of them are more or less standard. We use the following notation: for $-\infty\leqslant A<B\leqslant+\infty$
\begin{equation}\label{}
    C_{c}(A,B)=\{\varphi\in C(A,B)\ |\ {\rm supp\,}\varphi\text{ is compact}\},
\end{equation}
\begin{equation}\label{}
    C_{0}(A,B)=\{\varphi\in C(A,B)\ |\ \forall\varepsilon\ \exists\text{ compact }K\subset(A,B):\ |\varphi(x)|<\varepsilon,\ \forall x\in(A,B)\backslash K\}.
\end{equation}
$C_{0}(A,B)$ is a Banach space with the norm $\|\varphi\|=\sup_{x\in\mathbb R}|\varphi(x)|$, $C_{c}(A,B)$ is its dense linear subset, the space $C_{0}^*(A,B)$ consists of finite complex-valued Borel measures (automatically regular), \cite[Ch. 3,6]{Rudin-1987}. The following proposition can be found, for example, in \cite{Aptekarev-Geronimo-2016}.

    \begin{proposition}\label{prop weak convergence}
    Let $\mathcal J$ be a Jacobi matrix \eqref{general Jacobi matrix} with some sequences $\{a_n\}_{n=1}^{\infty}$ of positive numbers, $\{b_n\}_{n=1}^{\infty}$ of real numbers, such that $\mathcal J$ is in the limit-point case, and $\mathcal J_N$ be defined by \eqref{J-N}. Then $\rho_N\to\rho$ in the $*$-weak sense as $N\to\infty$.
    \end{proposition}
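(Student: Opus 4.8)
My plan is to reduce the $*$-weak convergence $\rho_N\to\rho$ to convergence of all power moments and then to invoke the determinacy of the associated Hamburger moment problem, which under the limit-point assumption holds by a classical theorem. First I would record that both $\rho_N$ and $\rho$ are Borel probability measures, since $\rho_N(\mathbb R)=\rho(\mathbb R)=\|e_1\|^2=1$, and that, $\mathcal J_N$ being bounded and self-adjoint, $\int_{\mathbb R}x^k\,d\rho_N(x)=(\mathcal J_N^k e_1,e_1)$ for every $k\in\mathbb N$. For the unbounded operator $\mathcal J$ I would note that $e_1,\mathcal J e_1,\mathcal J^2 e_1,\dots$ are all finitely supported vectors (a Jacobi matrix maps finitely supported sequences to finitely supported sequences), and that the limit-point assumption means precisely that the restriction of $\mathcal J$ to finitely supported sequences is essentially self-adjoint; hence $e_1\in\mathcal D(\mathcal J^k)$ for all $k$, $\int_{\mathbb R}x^{2k}\,d\rho(x)=\|\mathcal J^k e_1\|^2<\infty$, and $\int_{\mathbb R}x^k\,d\rho(x)=(\mathcal J^k e_1,e_1)$.

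The key observation, and an elementary one, is that since $\mathcal J$ and $\mathcal J_N$ are both tridiagonal, $\mathcal J^k e_1$ is supported in the first $k+1$ coordinates and $(\mathcal J^k e_1,e_1)$ depends only on the entries $a_i,b_i$ with $i\leqslant k$; by the definition \eqref{J-N} of $\mathcal J_N$ these coincide with the corresponding entries of $\mathcal J_N$ whenever $N\geqslant k$. Therefore, for each fixed $k$, the moments stabilize, $(\mathcal J_N^k e_1,e_1)=(\mathcal J^k e_1,e_1)$ for all $N\geqslant k$, so $\int_{\mathbb R}x^k\,d\rho_N\to\int_{\mathbb R}x^k\,d\rho$ as $N\to\infty$ for every $k$.

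Finally I would pass from moment convergence to $*$-weak convergence by the standard route. Since the second moments $\int_{\mathbb R}x^2\,d\rho_N$ are eventually constant, hence uniformly bounded, the Chebyshev inequality makes $\{\rho_N\}$ tight, so every subsequence has a further weakly convergent subsequence with a probability-measure limit $\nu$ (Prokhorov); uniform boundedness of the moments $\int_{\mathbb R}x^{2k}\,d\rho_N$ together with tightness yields uniform integrability of $x\mapsto x^k$, whence $\int_{\mathbb R}x^k\,d\nu=\lim\int_{\mathbb R}x^k\,d\rho_N=\int_{\mathbb R}x^k\,d\rho$ for all $k$. The limit-point assumption makes the moment problem for $\rho$ determinate \cite{Akhiezer-1965}, so $\nu=\rho$, and since every subsequence has a subsequence converging to the same limit, the whole sequence $\rho_N$ converges $*$-weakly to $\rho$. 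The only subtle points will be the careful handling of powers of the unbounded operator $\mathcal J$ and the recognition that the limit-point hypothesis is exactly what makes the moment problem determinate (and hence pins down the limit); an alternative, closer to the Weyl-function language of the paper, is to establish strong resolvent convergence $\mathcal J_N\to\mathcal J$ directly on the core of finitely supported sequences, deduce $m_N(z)\to m(z)$ for $z\in\mathbb C\backslash\mathbb R$, and conclude via the density of the linear span of $\{(x-z)^{-1}:z\in\mathbb C\backslash\mathbb R\}$ in $C_0(\mathbb R)$.
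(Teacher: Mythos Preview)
The paper does not actually supply a proof of this proposition: it simply states that the result ``can be found, for example, in \cite{Aptekarev-Geronimo-2016}'' and moves on. So there is no argument in the paper to compare against line by line.

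Your proposal is correct and is one of the two standard routes to this fact. The moment argument works exactly as you say: for a tridiagonal matrix, $(\mathcal J^k e_1,e_1)$ is a polynomial in finitely many of the entries $a_i,b_i$ (in fact only those with $i\leqslant\lceil k/2\rceil$, since it is a sum over nearest-neighbor walks of length $k$ returning to the first site), so the $k$-th moments of $\rho_N$ and $\rho$ literally coincide once $N$ is large. Tightness from the uniformly bounded even moments, Prokhorov, uniform integrability, and finally the equivalence ``limit-point $\Leftrightarrow$ Hamburger moment problem determinate'' (Akhiezer) close the argument. One small remark: the $*$-weak convergence the paper needs is against $C_0(\mathbb R)$, which is weaker than the full weak convergence of probability measures you establish, so you are proving slightly more than required.

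The alternative you sketch at the end --- strong resolvent convergence on the common core of finitely supported vectors, hence $m_N(z)\to m(z)$ pointwise on $\mathbb C\setminus\mathbb R$, hence convergence of $\int\varphi\,d\rho_N$ for $\varphi$ in the dense span of resolvent kernels --- is equally valid and is closer in spirit to the Weyl-function machinery the paper relies on elsewhere. Either route would be acceptable here; the moment approach has the advantage that it makes the role of the limit-point hypothesis completely transparent.
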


 The next elementary proposition is also essentially well-known, but it is convenient for us to use it in the following special form.

    \begin{proposition}\label{prop weak limit}
    Let $\mathcal J$ be a Jacobi matrix \eqref{general Jacobi matrix} with some sequences $\{a_n\}_{n=1}^{\infty}$ of positive numbers, $\{b_n\}_{n=1}^{\infty}$ of real numbers, such that $\mathcal J$ is in the limit-point case, and $\mathcal J_N$ be defined by \eqref{J-N}, $-\infty\leqslant A<B\leqslant+\infty$. If there exists an increasing sequence $\{N_k\}_{k=1}^{\infty}$ such that $\rho'_{N_k}(\lambda)\to f(\lambda)$ as $k\to\infty$ uniformly in $\lambda\in K$ for every fixed compact set $K\subset(A,B)$, then the spectral measure $\rho$ of the operator $\mathcal J$ is absolutely continuous on the interval $(A,B)$ and $\rho'(\lambda)=f(\lambda)$ for a.a. $\lambda\in(A,B)$.
    \end{proposition}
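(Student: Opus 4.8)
The plan is to identify $\rho$ on the Borel subsets of $(A,B)$ by testing against compactly supported continuous functions and passing to the limit along $\{N_k\}$. Fix $\varphi\in C_c(A,B)$ and set $K:=\mathop{\mathrm{supp}}\varphi$, a compact subset of $(A,B)$. The goal is the identity
\[
\int_{\mathbb R}\varphi\,d\rho=\int_A^B\varphi(\lambda)f(\lambda)\,d\lambda .
\]
Once this is proved for every such $\varphi$, I would invoke the uniqueness part of the Riesz representation theorem: the function $f$, being a locally uniform limit of the continuous nonnegative densities $\rho'_{N_k}$, is itself continuous and nonnegative on $(A,B)$, so $f(\lambda)\,d\lambda$ is a Radon measure there; since $\rho$ is a finite (hence Radon) Borel measure, coincidence of the two functionals on $C_c(A,B)$ forces $\rho\big|_{(A,B)}=f(\lambda)\,d\lambda$, which is exactly the absolute continuity of $\rho$ on $(A,B)$ with $\rho'=f$ a.e.\ there.

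To obtain the displayed identity I would combine two convergences. On one side, Proposition~\ref{prop weak convergence} gives $\rho_{N_k}\to\rho$ in the $*$-weak sense, so $\int_{\mathbb R}\varphi\,d\rho_{N_k}\to\int_{\mathbb R}\varphi\,d\rho$. On the other side, for all sufficiently large $k$ the compact set $K$ is contained in the interval $(b_{N_k}-2a_{N_k},\,b_{N_k}+2a_{N_k})$, on which, by Proposition~\ref{prop density of J-N}, $\mathcal J_{N_k}$ has purely absolutely continuous spectrum and $\rho_{N_k}$ has the continuous density $\rho'_{N_k}$, while all point masses of $\mathcal J_{N_k}$ lie outside this interval. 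Consequently $\int_{\mathbb R}\varphi\,d\rho_{N_k}=\int_K\varphi(\lambda)\,\rho'_{N_k}(\lambda)\,d\lambda$ for large $k$, and since $\rho'_{N_k}\to f$ uniformly on $K$ while $\varphi$ is bounded and $|K|<\infty$, the right-hand side tends to $\int_K\varphi f\,d\lambda=\int_A^B\varphi f\,d\lambda$. Equating the two limits yields the identity.

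The only slightly delicate point is the step asserting that a fixed compact $K\subset(A,B)$ eventually lies inside the absolutely continuous band $(b_{N_k}-2a_{N_k},\,b_{N_k}+2a_{N_k})$ of $\mathcal J_{N_k}$, so that the eigenvalue part of $\rho_{N_k}$ cannot contribute over $K$. This is in fact what makes the hypothesis meaningful: the statement ``$\rho'_{N_k}(\lambda)\to f(\lambda)$ uniformly on $K$'' presupposes that $\rho'_{N_k}$ is a genuine function on $K$ for large $k$, and by the explicit formula of Proposition~\ref{prop density of J-N} this is precisely the situation $K\subset(b_{N_k}-2a_{N_k},\,b_{N_k}+2a_{N_k})$. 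Everything else — the finite-mass bookkeeping, the interchange of limit and integral over the fixed compact $K$, and the application of Riesz's theorem — is routine, so I expect this to be the main (and essentially only) obstacle, and a mild one.
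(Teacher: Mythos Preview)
Your argument is correct and follows essentially the same path as the paper: test against $\varphi\in C_c(A,B)$, combine the $*$-weak convergence from Proposition~\ref{prop weak convergence} with the uniform convergence of $\rho'_{N_k}$ on $\mathrm{supp}\,\varphi$, and invoke uniqueness of the limiting measure. The paper additionally extends the convergence from $C_c(A,B)$ to $C_0(A,B)$ via Banach--Steinhaus (using $\|\rho'_{N_k}d\lambda\|_{C_0^*(A,B)}\le 1$) before citing uniqueness of $*$-weak limits, whereas your direct appeal to Riesz uniqueness on $C_c$ is equally valid and slightly more economical; your explicit discussion of $K$ eventually lying in the absolutely continuous band also makes visible a point the paper's proof passes over silently.
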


\begin{proof}
By Proposition \ref{prop weak convergence}, $\rho_{N_k}\to\rho$ $*$-weakly as $k\to\infty$ in $C_0^*(\mathbb R)$. Hence $\rho_{N_k}|_{(A,B)}\to\rho|_{(A,B)}$ $*$-weakly in $C_0^*(A,B)$: for any $\varphi\in C_0(A,B)$ consider its continuation to $\mathbb R$ by zero, $\widetilde\varphi\in C_0(\mathbb R)$, and convergence follows. Since one sequence in this topology cannot have two limits, it is enough to prove that $d\rho_{N_k}(\lambda)|_{(A,B)}\to f(\lambda)d\lambda$ $*$-weakly in $C_0^*(A,B)$. For every $\varphi\in C_{c}(A,B)$ we have:
\begin{multline}\label{}
    \left|\int_A^B\rho_{N_k}'(\lambda)\varphi(\lambda)d\lambda-\int_A^Bf(\lambda)\varphi(\lambda)d\lambda\right|
    =\left|\int_A^B(\rho_{N_k}'(\lambda)-f(\lambda))\varphi(\lambda)d\lambda\right|
    \\
    \leqslant\sup_{x\in{\rm supp\,}\varphi}|\rho_{N_k}'(x)-f(x)|\int_A^B|\varphi(\lambda)|d\lambda\to0,\quad k\to\infty.
\end{multline}
By the Banach--Steinhaus theorem, owing to the uniform estimate
\begin{equation}\label{}
    \|\rho'_{N_k}(\lambda)d\lambda\|_{C_0^*(A,B)}\leqslant 1,
\end{equation}
convergence holds for every $\varphi\in C_0(A,B)$ as well. Thus $d\rho(\lambda)|_{(A,B)}=f(\lambda)d\lambda$, which completes the proof.
\end{proof}

\section{Spectral density in the critical case}\label{section main}

In this section we formulate and prove the main result of the paper. We investigate the asymptotics of solutions to \eqref{eigenvector equation} as $n\to\infty$ locally in $\lambda$. Let us fix some $0<r<R<\infty$ and consider the open set
\begin{equation}\label{Omega-0}
    \Omega_0:=\{\lambda\in\mathbb C:r<|\lambda|<R\}\backslash\mathbb R_-,
\end{equation}
see Figure \ref{fig 1}. $\overline\Omega_0$ denotes the closure containing both sides (considered to be different) of the cut along $\mathbb R_-$ (closure on the Riemannian surface for $\sqrt\lambda$). Writing $[-R,-r]$ we will always mean the upper side of the cut.
\begin{figure}[h]
\begin{center}
\includegraphics[width=0.4\textwidth]{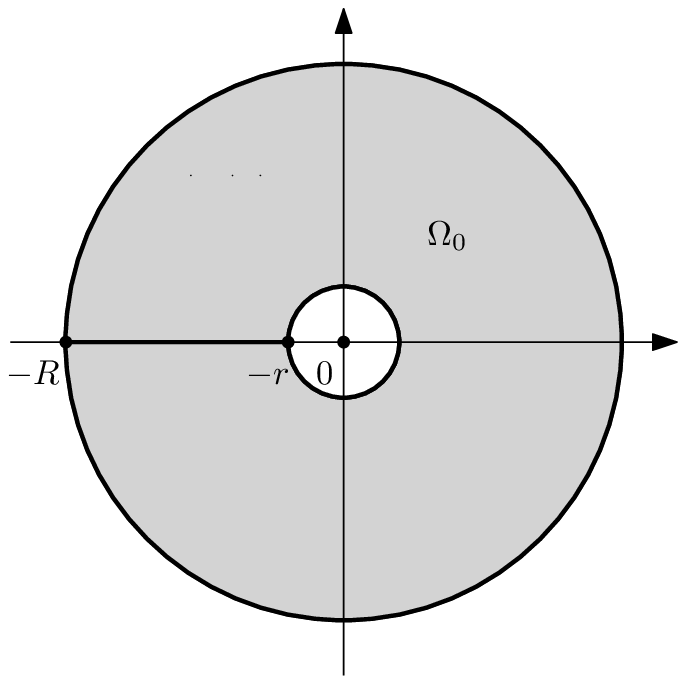}
\caption{The domain $\Omega_0$} \label{fig 1}
\end{center}
\end{figure}

    \begin{theorem}\label{main theorem}
    Let the the entries of the Jacobi matrix $\mathcal J$ be given by
    \begin{equation}\label{entries thm}
        a_n=n^{\alpha}+p_n,\quad b_n=-2n^{\alpha}+q_n
    \end{equation}
    with
    \begin{equation}\label{alpha}
        \alpha\in(0,1)
    \end{equation}
    and real sequences $\{p_n\}_{n=1}^{\infty}$ and $\{q_n\}_{n=1}^{\infty}$ such that $a_n>0$ for all $n$ and
    \begin{equation}\label{p-q from th}
        \left\{\frac{p_n}{n^{\frac{\alpha}2}}\right\}_{n=1}^{\infty},\left\{\frac{q_n}{n^{\frac{\alpha}2}}\right\}_{n=1}^{\infty}\in l^1.
    \end{equation}
    Consider the domain $\Omega_0$. There exists $N_0\in\mathbb N$ such that for every $\lambda\in\overline\Omega_0$ the equation
    \begin{equation}\label{eigenvector equation thm}
        a_{n-1}u_{n-1}+b_nu_n+a_nu_{n+1}=\lambda u_n,\quad n\geqslant 2,
    \end{equation}
    has a solution $u^-(\lambda):=\{u_n^-(\lambda)\}_{n=1}^{\infty}$ with the asymptotics
    \begin{equation}\label{u-pm}
        u_n^{-}(\lambda)=\left(\prod_{l=N_0}^n\eta_l^{-}(\lambda)\right)(1+o(1)),\quad n\to\infty,
    \end{equation}
    uniform in $\lambda\in\overline\Omega_0$, where\footnote{the branch of the square roots  should be chosen so that they are positive for positive $\lambda$}
    \begin{equation}\label{eta-}
        \eta^{-}_n(\lambda)
        =
        1+\frac{\lambda}{2n^{\alpha}}+\frac{\alpha}{4n}
        -
        \frac{\sqrt{\lambda}}{n^{\frac{\alpha}2}}
        \sqrt{1+\frac{\lambda}{4n^{\alpha}}+\frac{\alpha}{\lambda n^{1-\alpha}}},
    \end{equation}
    the value of $N_0$ is chosen so that $\eta^-_n(\lambda)\neq0$ for $\lambda\in\overline\Omega_0$, $n\geqslant N_0$. For every $n$ $u^-_n$ is analytic in $\Omega_0$ and continuous in
    $\overline\Omega_0$. For $\lambda\in[-R,-r]$ there exists a non-zero limit
    \begin{equation}\label{H}
        H(\lambda ):=\lim_{n\to\infty}n^{\frac{\alpha}4}\prod_{l=N_0}^n|\eta^-_l(\lambda)|,
    \end{equation}
    which is continuous in $\lambda$. For $\lambda\in[-R,-r]$ the sequence $u^+(\lambda):=\{u^+_n(\lambda)\}_{n=1}^{\infty}$ with $u^+_n(\lambda)=\overline{u^-_n(\lambda)}$ is another solution of the equation \eqref{eigenvector equation thm} and the nonzero Wronskian
    \begin{equation}\label{Wronskian u+u-}
        W\{u^+(\lambda),u^-(\lambda)\}=-2i\sqrt{-\lambda}H^2(\lambda).
    \end{equation}
    The orthogonal polynomials of the first kind associated with $\mathcal J$ can be expressed as
    \begin{equation}\label{decomposition}
        P_n(\lambda)=\Psi(\lambda)u_n^{+}(\lambda)+\overline{\Psi(\lambda)}u_n^{-}(\lambda),
        \quad\lambda\in[-R,-r],\quad n\in\mathbb N,
    \end{equation}
    where
    \begin{equation}\label{Psi}
        \Psi(\lambda)=\frac{u_0^-(\lambda)}{2i\sqrt{-\lambda}H^2(\lambda)},
    \end{equation}
    $u_0^-(\lambda):=(\lambda-b_1)u_1^-(\lambda)-a_1u_2^-(\lambda)$ (assuming formally in \eqref{eigenvector equation thm} that $a_0:=1$). The spectral density of $\mathcal J$ is given by the formula
    \begin{equation}\label{density}
        \rho'(\lambda)=\frac1{4\pi\sqrt{-\lambda}|\Psi(\lambda)|^2H^2(\lambda)}=\frac{\sqrt{-\lambda}H^2(\lambda)}{\pi|u^-_0(\lambda)|^2},
        \quad\lambda\in[-R,-r].
    \end{equation}
    \end{theorem}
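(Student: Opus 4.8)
The plan is to prove the statements in the listed order: (i) the asymptotics \eqref{u-pm} of the solution $u^-$ together with its analyticity; (ii) the existence and properties of the limit $H$; (iii) the relation $u^+=\overline{u^-}$, the value of the Wronskian, and the decomposition \eqref{decomposition}; and finally (iv) the formula \eqref{density} for the spectral density, obtained by computing the spectral densities of the stabilized matrices $\mathcal J_N$ of Section~\ref{section stabilized} and passing to the limit via Propositions~\ref{prop density of J-N}--\ref{prop weak limit}. The genuinely hard part is (i): this is where the critical (Jordan box) case forces a departure from the non-critical analysis of \cite{Aptekarev-Geronimo-2016} and where the precise form of the hypotheses \eqref{p-q from th} is used; the rest is, modulo careful but routine estimates, a consequence of (i) and the explicit form of $\eta^-_n$.

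For (i) I would work in the vector form $\vec u_{n+1}=B_n(\lambda)\vec u_n$. Since $B_n$ tends to the Jordan box \eqref{limit matrix}, a single limiting diagonalization is impossible; instead I would follow a modification of Kooman's method \cite{Kooman-2007} (an iterated form of the Benzaid--Harris--Lutz scheme \cite{Benzaid-Lutz-1987,Harris-Lutz-1975}): perform a finite chain of explicit, $\lambda$- and $n$-dependent linear substitutions $\vec u_n=T_n(\lambda)\vec v_n$ bringing the system to the L-diagonal (Levinson) form
\begin{equation*}
    \vec v_{n+1}=\bigl(\operatorname{diag}(\eta^+_n(\lambda),\eta^-_n(\lambda))+E_n(\lambda)\bigr)\vec v_n,\qquad \sum_{n}\sup_{\lambda\in\overline\Omega_0}\|E_n(\lambda)\|<\infty,
\end{equation*}
where $\eta^\pm_n$ are the two multipliers of $B_n$ (the roots of its characteristic polynomial, refined to the order that makes the remainder summable), $\eta^-_n$ being exactly \eqref{eta-} and $\eta^+_n$ the same expression with the opposite sign before the square root. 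The point is that the successive off-diagonalizing substitutions divide the corrections produced by $p_n$, $q_n$ and by the $n$-dependence of $B_n$ by the eigenvalue gap, which is of order $n^{-\alpha/2}$; the terms $p_n/n^{\alpha/2}$, $q_n/n^{\alpha/2}$ are then $l^1$ by \eqref{p-q from th}, while the purely $n$-dependent corrections become $l^1$ after finitely many iterations precisely because $\alpha>0$, so $E_n$ is $l^1$ uniformly on $\overline\Omega_0$ (here $r>0$ keeps $\lambda$ away from the branch point of $\sqrt\lambda$, so the substitutions stay bounded). The discrete Levinson theorem \cite[Theorem~8.1]{Coddington-Levinson-1955}, applied in this uniform $l^1$ setting, then produces a basis of solutions $\vec v^\pm_n=(\vec e_\pm+o(1))\prod_{l=N_0}^n\eta^\pm_l$, hence solutions $u^\pm$ of \eqref{eigenvector equation thm} with the asymptotics \eqref{u-pm}, the $o(1)$ being uniform in $\lambda$ because the Levinson estimates are governed by $\sum_n\sup_\lambda\|E_n(\lambda)\|$. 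Analyticity of $u^-_n$ in $\Omega_0$ and continuity in $\overline\Omega_0$ follow because $u^-_n$ is a locally uniform limit of $\lambda$-analytic functions: the finitely many substitutions and the contraction mapping in the proof of the Levinson theorem all preserve analyticity. This step I expect to be the main obstacle.

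Parts (ii) and (iii) are then explicit. For $\lambda\in[-R,-r]$ one has $\sqrt\lambda=i\sqrt{-\lambda}$ and, for $n\geqslant N_0$, the argument of the inner square root in \eqref{eta-} is real and positive, so the whole term $\frac{\sqrt\lambda}{n^{\alpha/2}}\sqrt{1+\dots}$ is purely imaginary; hence $\operatorname{Re}\eta^-_n=1+\frac{\lambda}{2n^\alpha}+\frac{\alpha}{4n}$ exactly, and an elementary computation gives $|\eta^-_n(\lambda)|^2=1-\frac{\alpha}{2n}+O(n^{-1-\alpha})$ with the error uniform and $l^1$ on $[-R,-r]$. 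Comparing $\prod_{l=N_0}^n|\eta^-_l|^2$ with $\prod_{l=N_0}^n(1-\frac{\alpha}{2l})$, which is asymptotically a constant times $n^{-\alpha/2}$ (write it as a ratio of Gamma functions and apply Stirling), the correction product converges to a nonzero limit uniformly in $\lambda$ (its factors being $1$ plus an absolutely summable term); taking square roots gives the nonzero, continuous limit \eqref{H}. Since \eqref{eigenvector equation thm} has real coefficients for $\lambda\in[-R,-r]$, $u^+:=\overline{u^-}$ is a solution, governed by $\eta^+_l=\overline{\eta^-_l}$, so $|u^\pm_n|\sim H(\lambda)n^{-\alpha/4}$. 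The Wronskian $W\{u^+,u^-\}=2i\,a_n\operatorname{Im}(\overline{u^-_n}u^-_{n+1})$ is independent of $n$, hence equals its $n\to\infty$ limit; this limit is $-2i\sqrt{-\lambda}H^2(\lambda)$, which I would obtain either by passing to the limit using \eqref{u-pm}, \eqref{eta-}, \eqref{H} (here one needs \eqref{u-pm} in a form precise enough that the subleading corrections do not contribute to the constant, which the careful analysis of (i) supplies), or, more transparently, by transporting the Wronskian through the substitutions $T_n$ and using $\det(\vec v^+_{n+1},\vec v^-_{n+1})=\det(\operatorname{diag}(\eta^+_n,\eta^-_n)+E_n)\det(\vec v^+_n,\vec v^-_n)$. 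Since this Wronskian is nonzero, $u^+$ and $u^-$ form a basis, so $P_n=c_+u^+_n+c_-u^-_n$; $P_n$ being real and $u^+=\overline{u^-}$ forces $c_-=\overline{c_+}$, i.e.\ \eqref{decomposition} with $\Psi=c_+$. Pairing \eqref{decomposition} with $u^-$ gives $\Psi=W\{P,u^-\}/W\{u^+,u^-\}$, and evaluating $W\{P,u^-\}$ at $n=1$ from the initial values of $P$ and the definition of $u^-_0$ gives $W\{P,u^-\}=-u^-_0$, hence \eqref{Psi}; in particular $u^-_0\neq0$ since $P_1=1\neq0$.

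For (iv) I would use the stabilized matrices $\mathcal J_N$. Because $\sum_n(|p_n|+|q_n|)/n^{\alpha/2}<\infty$ forces $\liminf_n n^{1-\alpha/2}(|p_n|+|q_n|)=0$, one can pick an increasing sequence $N_k\to\infty$ along which $|p_{N_k}|+|q_{N_k}|=o(N_k^{\alpha/2-1})$; then $b_{N_k}+2a_{N_k}=q_{N_k}+2p_{N_k}\to0$ and $b_{N_k}-2a_{N_k}\to-\infty$, so eventually $[-R,-r]\subset(b_{N_k}-2a_{N_k},\,b_{N_k}+2a_{N_k})$ and Proposition~\ref{prop density of J-N} applies there. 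Evaluating the constant Wronskian $W\{P,u^-\}=-u^-_0$ at $n=N$ gives the exact identity
\begin{equation*}
    P_{N+1}(\lambda)-\frac{u^-_{N+1}(\lambda)}{u^-_N(\lambda)}\,P_N(\lambda)=\frac{u^-_0(\lambda)}{a_N\,u^-_N(\lambda)},
\end{equation*}
so $P_{N+1}-z_NP_N=\dfrac{u^-_0}{a_Nu^-_N}+\Bigl(\dfrac{u^-_{N+1}}{u^-_N}-z_N\Bigr)P_N$. A direct expansion shows $z_N(\lambda)=\eta^-_N(\lambda)+o(N^{-\alpha/2})$, and along $N_k$ (using the smallness of $p_{N_k},q_{N_k}$ to bound the increment of the Levinson remainder in \eqref{u-pm}) also $u^-_{N_k+1}/u^-_{N_k}=\eta^-_{N_k}+o(N_k^{-\alpha/2})$; since $P_N=O(N^{-\alpha/4})$ while $|u^-_0/(a_Nu^-_N)|$ is of exact order $N^{-3\alpha/4}$, the second term is negligible and $P_{N+1}-z_NP_N=\dfrac{u^-_0}{a_Nu^-_N}(1+o(1))$ along $N_k$. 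Substituting this, together with $\sqrt{1-\bigl(\frac{\lambda-b_N}{2a_N}\bigr)^2}\sim\sqrt{-\lambda}\,N^{-\alpha/2}$, $a_N\sim N^\alpha$ and $|u^-_N|\sim H(\lambda)N^{-\alpha/4}$, into \eqref{density of J-N} yields $\rho'_{N_k}(\lambda)\to\dfrac{\sqrt{-\lambda}\,H^2(\lambda)}{\pi|u^-_0(\lambda)|^2}$ uniformly on compact subsets of $(-R,-r)$. Proposition~\ref{prop weak limit} then identifies this limit with $\rho'$ on $(-R,-r)$, hence a.e.\ on $[-R,-r]$; rewriting through \eqref{Psi} gives the first expression in \eqref{density}. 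Besides the asymptotic input from (i), the subtle point here is the precision $o(N^{-\alpha/2})$ required in $z_N-u^-_{N+1}/u^-_N$, which is exactly why the subsequence $N_k$ is chosen with $p_{N_k},q_{N_k}$ small.
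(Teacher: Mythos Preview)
Your plan is essentially the paper's: Kooman-type substitutions to reach L-diagonal form, a Volterra/Levinson argument for the small solution $u^-$, the explicit calculation of $|\eta^-_n|^2$ for $H$, the Wronskian via the refined asymptotics, and the stabilized matrices along a subsequence for~$\rho'$. Two points deserve comment.

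First, in (i) you invoke ``the discrete Levinson theorem'' to get a basis $\vec v^\pm$. The paper deliberately constructs only the small solution, via a Volterra integral equation (their Lemma~4.4), and stresses that this does \emph{not} require a dichotomy condition; that is exactly what makes the analyticity in $\Omega_0$ and the uniformity of the $o(1)$ on $\overline\Omega_0$ go through. A standard Levinson theorem giving both solutions would need dichotomy, and the ``large'' solution is not produced by a contraction, so your analyticity argument as stated applies cleanly only to $u^-$. Since you only use $u^-$ (and $u^+=\overline{u^-}$ for real $\lambda$), this is harmless, but phrase (i) accordingly.

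Second, and more substantively, your justification in (iv) of $u^-_{N_k+1}/u^-_{N_k}=z_{N_k}+o(N_k^{-\alpha/2})$ by ``the smallness of $p_{N_k},q_{N_k}$ bounding the increment of the Levinson remainder'' is not quite right. The scalar remainder $\epsilon_n$ in $u^-_n=(\prod\eta^-_l)(1+\epsilon_n)$ satisfies $\epsilon_{n+1}-\epsilon_n=O(\|R_n\|)+o(n^{-\alpha/2})$, and $R_n$ contains the term $(c_n-\chi_n)/(g^-_{n+1}-g^+_{n+1})$; but $c_n$ depends on $p_{n-1},q_{n-1}$ as well as $p_n,q_n$, and those are \emph{not} small along your subsequence. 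What actually works (and what the paper does) is to keep the \emph{matrix} form of the asymptotics,
\[
\vec u^{\,-}_{n+1}=\Bigl(\prod_{l=N_0}^{n}\eta^-_l\Bigr)
\begin{pmatrix}1&1\\h^+_{n+1}&h^-_{n+1}\end{pmatrix}(\vec e_-+o(1)),
\]
so that $u^-_{n+1}-z_nu^-_n=(\prod\eta^-_l)\bigl[(h^+_{n+1}-z_n)\,o(1)+(h^-_{n+1}-z_n)(1+o(1))\bigr]$. Here the $o(1)$ is the uniform Levinson error (no subsequence needed); the subsequence is used only to make the \emph{explicit} quantity $h^-_{n_k+1}-z_{n_k}$ (which depends on $p_{n_k},q_{n_k}$ through $d_{n_k}$ alone) of order $o(n_k^{-\alpha/2})$, while $h^+_{n+1}-z_n=O(n^{-\alpha/2})$ multiplies the $o(1)$ and is harmless. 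The paper flags exactly this: the scalar asymptotics \eqref{u-pm} loses information through the degenerate limit $\begin{pmatrix}1&1\\1&1\end{pmatrix}$, and one must retain the matrix form to compute $u^-_{n+1}-z_nu^-_n$ and the Wronskian. Your exact identity $P_{N+1}-\frac{u^-_{N+1}}{u^-_N}P_N=\frac{u^-_0}{a_Nu^-_N}$ is a pleasant shortcut compared with the paper's direct use of $P_n=\Psi u^+_n+\overline\Psi u^-_n$, but it needs the same refined input; replace your parenthetical by an appeal to the matrix form from (i).
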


    \begin{remark}
    Another critical case with $b_n=2n^{\alpha}+q_n$ can be easily reduced to the situation \eqref{entries thm} with $\lambda$ replaced by $-\lambda$.
    \end{remark}

    \begin{remark}
    The definition of the solution $u^-_n(\lambda)$  depends through the value of $N_0$ on the set $\Omega_0$ (i.e., on $r$ and $R$), as well as the coefficients $\Psi$, $H$, $u^-_0$: they differ by multiplication by a product of some of the values $\eta^-_n(\lambda)$ for ``small'' $n$, a function of $\lambda$. Unfortunately, this function cannot be taken the same for the whole set $(\overline{\mathbb C\backslash\mathbb R_-})\backslash\{0\}$. At the same time, the functions $|\Psi(\lambda)|H(\lambda)$, $\frac{H(\lambda)}{|u^-_0(\lambda)|}$ and hence $\rho'(\lambda)$ do not depend on $r$ and $R$. Finally, we can take any compact set in $(\overline{\mathbb C\backslash\mathbb R_-})\backslash\{0\}$ instead of $\overline\Omega_0$.
    \end{remark}

    \begin{remark}\label{rem on the form of asymptotics}
    1.
    In the particular case $\alpha\in\left(\frac12,\frac23\right)$ considered in \cite{Janas-Naboko-Sheronova-2009} the formula \eqref{u-pm} and its conjugate version can be written as
    \begin{equation}\label{JNS}
        u_n^{\pm}(\lambda)=\frac{H(\lambda)+o(1)}{n^{\frac{\alpha}4}}
        \exp\left(\pm i\left(
        \frac{\sqrt{-\lambda}n^{1-\frac{\alpha}2}}{1-\frac{\alpha}2}
        -\frac{n^{\frac{\alpha}2}}{\sqrt{-\lambda}}
        +\frac{(\sqrt{-\lambda})^3n^{1-\frac{3\alpha}2}}{24\left(1-\frac{3\alpha}2\right)}
        +\varphi_0(\lambda)\right)\right).
    \end{equation}
    where $\varphi_0$ is some real-valued function. The formula \eqref{JNS} coincides up to a constant in $n$ multiple with the formula from the work \cite{Janas-Naboko-Sheronova-2009}. Note that the restriction $\alpha\in\left(\frac12,\frac23\right)$ is essential here.

    2.
    It is easy to see that in the case $\alpha\in(0,1)\backslash(\frac12,\frac23)$ the asymptotics of $\prod_{l=1}^n\eta^-_l$ has the form similar to \eqref{JNS}. The power terms in $n$ in the exponent have orders from the interval $(0,1)$ and correspond to non-summable terms in the asymptotic expansions of $\ln \eta^-_n$ as $n\to\infty$. The number of such terms depends on $\alpha$ and grows infinitely as $\alpha$ approaches $0$ or $1$. At the same time the decay of polynomials is always of the order $\frac1{n^{\frac{\alpha}4}}$ for $\lambda<0$, and the asymptotics of the solution $u^-$ can be written as
    \begin{equation}\label{}
        u^{\pm}_n(\lambda)=\frac{H(\lambda)+o(1)}{n^{\frac{\alpha}4}}\exp(\pm i\phi_n(\lambda)),\quad n\to\infty,
    \end{equation}
    where $\phi_n(\lambda)=\sum\limits_{k=0}^{K}\varphi_k(\lambda)n^{\alpha_k}$ with some $K$, $0=\alpha_0<\alpha_1<\alpha_2<...<\alpha_K<1$ and some real-valued $\{\varphi_k(\lambda)\}_{k=1}^{K}$. Elementary calculations show that the number $K\sim\left(\frac2{\alpha}+\frac1{2(1-\alpha)}\right)$ as $\alpha\to0$ or $\alpha\to1$.
    \end{remark}

\begin{proof}
Consider $\lambda\in\overline\Omega_0$. For the system
\begin{equation}\label{system u}
    \vec u_{n+1}=B_n(\lambda)\vec u_n,\quad n\geqslant2,
\end{equation}
which is equivalent to the eigenvector equation, we look for a sequence $\{S_n(\lambda)\}_{n=N_0}^{\infty}$ of diagonal matrices
\begin{equation}\label{}
    S_n(\lambda)=
    \begin{pmatrix}
    s_n^+(\lambda) & 0 \\
    0 & s_n^-(\lambda) \\
    \end{pmatrix}
\end{equation}
such that the transformation
\begin{equation}\label{transformation u to v}
    \vec u_n=S_n(\lambda)\vec v_n,\quad \vec v_{n+1}=S_{n+1}^{-1}(\lambda)B_n(\lambda)S_n(\lambda)\vec v_n,\quad n\geqslant N_0,
\end{equation}
leads to the system (reminding the system for the discrete Schr\"odinger operator with the spectral parameter on the boundary of the essential spectrum)
\begin{equation}\label{system v}
    \vec v_{n+1}=
    \begin{pmatrix}
      0 & 1 \\
      -1+c_n(\lambda) & 2 \\
    \end{pmatrix}\vec v_n,
    \quad n\geqslant N_0,
\end{equation}
with some real sequence $\{c_n(\lambda)\}_{n=N_0}^{\infty}$.  The value $N_0\in\mathbb N$ will be chosen large enough and uniform in $\lambda\in\overline\Omega_0$ here and everywhere in the paper. This form corresponds by
$\vec v_n=\begin{pmatrix}
x_n \\
 x_{n+1}
 \end{pmatrix}$ to the three-term recurrence relation $x_{n+2}-2x_{n+1}+(1-c_n(\lambda))x_n=0$, which was studied in the work of R.-J. Kooman, \cite{Kooman-2007}. In order to obtain this we need the following equalities to hold:
\begin{equation}\label{equality for s}
    \begin{pmatrix}
     0 & \frac{s_n^-(\lambda)}{s_{n+1}^+(\lambda)} \\
     -\frac{s_n^+(\lambda)a_{n-1}}{s_{n+1}^-(\lambda)a_n} & \frac{s_n^-(\lambda)(\lambda-b_n)}{s_{n+1}^-(\lambda)a_n} \\
    \end{pmatrix}
    =
    \begin{pmatrix}
    0 & 1 \\
    -1+c_n(\lambda) & 2 \\
    \end{pmatrix}
    ,\quad n\geqslant N_0.
\end{equation}
From the right column $\frac{s_n^-(\lambda)(\lambda-b_n)}{s_{n+1}^-(\lambda)a_n}=2$, $s_n^-(\lambda)=s_{n+1}^+(\lambda)$. Denote
\begin{equation}\label{}
    d_n(\lambda):=\frac{\lambda-b_n}{2a_n},\quad n\geqslant1,
\end{equation}
then $\frac{s^-_{n+1}(\lambda)}{s^-_n(\lambda)}=d_n(\lambda)$.  The index $N_0$ will be chosen large enough to ensure, in particular, that $d_n(\lambda)\neq0$ for $\lambda\in\overline\Omega_0$ for $n\geqslant N_0-1$. Take
\begin{equation}\label{}
    s_n^-(\lambda):=\prod_{l=N_0-1}^{n-1}d_l(\lambda),\quad s_n^+(\lambda):=\prod_{l=N_0-1}^{n-2}d_l(\lambda),\quad n\geqslant N_0,
\end{equation}
so that
\begin{equation}\label{S-n}
    S_n(\lambda)=\left(\prod_{l=N_0-1}^{n-2}d_l(\lambda)\right)
    \begin{pmatrix}
      1 & 0 \\
      0 & d_{n-1}(\lambda) \\
    \end{pmatrix},
    \quad n\geqslant N_0.
\end{equation}
From the equality in the lower-left entries in \eqref{equality for s} we have
\begin{equation}\label{}
    -1+c_n(\lambda)=-\frac{s_n^+(\lambda)a_{n-1}}{s_{n+1}^-(\lambda)a_n},
\end{equation}
therefore one has to define
\begin{equation}\label{}
    c_n(\lambda):=1-\frac{4a_{n-1}^2}{(\lambda-b_{n-1})(\lambda-b_n)},\quad n\geqslant N_0.
\end{equation}
 Note that $c_n(\lambda)\to0$ as $n\to\infty$ in the critical case. The system \eqref{system v} by the substitution
\begin{equation}\label{transformation v to w}
    \vec v_n=
    \begin{pmatrix}
      1 & 0 \\
      1 & 1 \\
    \end{pmatrix}
    \vec w_n
\end{equation}
is further transformed to the system
\begin{equation}\label{system w}
    \vec w_{n+1}=
    \begin{pmatrix}
      1 & 1 \\
      c_n(\lambda) & 1 \\
    \end{pmatrix}
    \vec w_n,\quad n\geqslant N_0,
\end{equation}
which has the same form as in \cite{Kooman-2007}. Following the method of Kooman we look for sequences $\{g_n^{\pm}(\lambda)\}_{n=N_0}^{\infty}$ such that the substitution
\begin{equation}\label{transformation w to y}
\vec w_n=
    \begin{pmatrix}
    1 & 1 \\
    g_n^+(\lambda) & g_n^-(\lambda) \\
    \end{pmatrix}
    \vec y_n,
\end{equation}
transforms the system \eqref{system w} to
\begin{multline}\label{system y}
    \vec y_{n+1}
    =
    \Bigg(
    \begin{pmatrix}
      1+g_n^+ & 0 \\
      0 & 1+g_n^- \\
    \end{pmatrix}
    +
    \frac1{g_{n+1}^--g_{n+1}^+}
    \\
    \times
    \begin{pmatrix}
      g_{n+1}^+-g_n^++g_n^+g_{n+1}^+-c_n & g_{n+1}^--g_n^-+g_n^-g_{n+1}^--c_n \\
      -(g_{n+1}^+-g_n^++g_n^+g_{n+1}^+-c_n) & -(g_{n+1}^--g_n^-+g_n^-g_{n+1}^--c_n ) \\
    \end{pmatrix}
    \Bigg)
    \vec y_n,
\end{multline}
which has the Levinson (L-diagonal) form \cite{Coddington-Levinson-1955,Bodine-Lutz-2015}, if the second term in the coefficient matrix in \eqref{system y} is summable. Combining the substitutions \eqref{transformation u to v}, \eqref{transformation v to w} and \eqref{transformation w to y} we see that solutions of the systems \eqref{system u} and \eqref{system y} are related by the equality
\begin{equation}\label{}
    \vec u_n=S_n(\lambda)
    \begin{pmatrix}
    1 & 1 \\
    1+g^+_n(\lambda) & 1+g^-_n(\lambda) \\
    \end{pmatrix}
    \vec y_n,\quad n\geqslant N_0.
\end{equation}
Consider the sequence $\{c_n(\lambda)\}_{n=N_0}^{\infty}$. It has the asymptotics
\begin{multline}\label{c n general}
    c_n(\lambda)=1-\frac{4((n-1)^{\alpha}+p_{n-1})^2}{(\lambda+2(n-1)^{\alpha}-q_{n-1})(\lambda+2n^{\alpha}-q_n)}
    \\
    =\frac{\frac{\lambda}{n^{\alpha}}+\frac{\lambda^2}{4n^{2\alpha}}+\frac{\alpha}n}{1+\frac{\lambda}{n^{\alpha}}+\frac{\lambda^2}
    {4n^{2\alpha}}}+\frac{r^{(1)}_n(\lambda)}{n^{\frac{\alpha}2}}=\chi_n(\lambda)+\frac{r^{(1)}_n(\lambda)}{n^{\frac{\alpha}2}},
\end{multline}
where $\{\sup_{\lambda\in\overline\Omega_0}\{|r^{(1)}_n(\lambda)|\}\}_{n=N_0}^{\infty}\in l^1$ for sufficiently large $N_0$. Here we use the notation
\begin{equation}\label{chi}
        \chi_n(\lambda):=
        \frac
        {\frac{\lambda}{n^{\alpha}}+\frac{\lambda^2}{4n^{2\alpha}}+\frac{\alpha}n}
        {\left(1+\frac{\lambda}{2n^{\alpha}}\right)^2}.
\end{equation}
The index $N_0$ will be chosen so that the roots and the poles of $\chi_n$ and the poles of $c_n$ lie outside of $\overline\Omega_0$ for $n\geqslant N_0$.

An important property which also arises in \cite[Theorem 1, case 1]{Kooman-2007} as a condition, and which one can check straightforwardly here, is that
\begin{equation}\label{condition Kooman}
    \left\{\sup_{\lambda\in \overline\Omega_0}\left|\frac{\chi_{n+1}(\lambda)-\chi_n(\lambda)}{\chi_n(\lambda)}+\frac{\alpha}n\right|\right\}_{n=N_0}^{\infty}\in l^1,
\end{equation}
due to the property
\begin{equation}\label{}
    \chi_{n+1}(\lambda)=\chi_n(\lambda)\left(1-\frac{\alpha}n+r_n^{(2)}(\lambda)\right)
    \quad\text{with}\quad\left\{\sup_{\lambda\in \overline\Omega_0}|r^{(2)}_n(\lambda)|\right\}_{n=N_0}^{\infty}\in l^1
\end{equation}
for sufficiently large $N_0$. Let us define
\begin{equation}\label{g}
    g_n^{\pm}(\lambda):=\pm\sqrt{\chi_n(\lambda)}+\frac{\alpha}{4n},\quad n\geqslant N_0.
\end{equation}
To specify the branch of the square root note that the function $\chi_n$ has two roots $\lambda^{\pm}_n:=2n^{\alpha}(-1\pm\sqrt{1-\frac{\alpha}n})$ such that $\lambda^-_n\to-\infty$, $\lambda^+_n\to0$ as $n\to\infty$, and one pole $\mu_n:=-2n^{\alpha}$. We choose $N_0$ so large that $\lambda^-_n,\mu_n<-3R$ and $\lambda^+_n>-r$ for all $n\geqslant N_0$. The branch can be chosen in $\mathbb C\backslash[\lambda^-_n,\lambda^+_n]$ by specifying that $\sqrt{\chi_n(0)}=\sqrt{\frac{\alpha}n}$ is a positive number (the principle branch). For every $n\geqslant N_0$ the function $\sqrt{\chi_n(\lambda)}$ is analytic in $\Omega_0$ and continuous in $\overline{\Omega}_0$. The same applies to the function $\sqrt{\frac{\lambda}{n^{\alpha}}+\frac{\lambda^2}{4n^{2\alpha}}+\frac{\alpha}n}$, and we can write in $\overline\Omega_0$ that
\begin{equation}\label{}
    \sqrt{\chi_n(\lambda)}
    =
    \frac{\sqrt{\frac{\lambda}{n^{\alpha}}+\frac{\lambda^2}{4n^{2\alpha}}+\frac{\alpha}n}}{1+\frac{\lambda}{2n^{\alpha}}}
    =
    \frac{\frac{\sqrt{\lambda}}{n^{\frac{\alpha}2}}
        \sqrt{1+\frac{\lambda}{4n^{\alpha}}+\frac{\alpha}{\lambda n^{1-\alpha}}}}{1+\frac{\lambda}{2n^{\alpha}}},\quad n\geqslant N_0
\end{equation}
which corresponds to the choice of branch in \eqref{eta-}. Taking into account the arguments of the numerator and the denominator of the second expression in $\overline\Omega_0\cap\overline{\mathbb C}_{\pm}$, one can see (using elementary geometric considerations) that
\begin{equation}\label{Re chi}
    {\rm Re\,}\sqrt{\chi_n(\lambda)}\geqslant0\text{ for }\lambda\in\overline\Omega_0,n\geqslant N_0
\end{equation}
for $N_0$ sufficiently large. Then from \eqref{condition Kooman} it follows that
\begin{equation}\label{condition summability}
    \left\{\sup_{\lambda\in \overline\Omega_0}\left|\frac{g_{n+1}^{\pm}(\lambda)-g_n^{\pm}(\lambda)+g_n^{\pm}(\lambda)g_{n+1}^{\pm}(\lambda)-\chi_n(\lambda)}{g_{n+1}^-(\lambda)-g_{n+1}^+(\lambda)}\right|\right\}_{n=N_0}^{\infty}\in l^1.
\end{equation}
Indeed, one can easily check that
\begin{multline}\label{}
    g^{\pm}_{n+1}(\lambda)=\pm\sqrt{\chi_n(\lambda)}\sqrt{1-\frac{\alpha}n+r^{(3)}_n(\lambda)}+\frac{\alpha}{4(n+1)}
    \\
    =\pm\sqrt{\chi_n(\lambda)}\mp\frac{\alpha\sqrt{\chi_n(\lambda)}}{2n}+\frac{\alpha}{4n}+\sqrt{\chi_n(\lambda)}r^{(4)}_n(\lambda),
\end{multline}
\begin{equation}\label{}
    g^{\pm}_{n+1}(\lambda)-g^{\pm}_n(\lambda)=\sqrt{\chi_n(\lambda)}\left(\mp\frac{\alpha}{2n}+r^{(5)}_n(\lambda)\right),
\end{equation}
\begin{equation}\label{equality g- - g+}
    g^-_{n+1}(\lambda)-g^+_{n+1}(\lambda)=-2\sqrt{\chi_{n+1}(\lambda)}=\sqrt{\chi_n(\lambda)}\left(-2+\frac{\alpha}n+r^{(5)}_n(\lambda)\right),
\end{equation}
\begin{equation}\label{}
    g^{\pm}_n(\lambda)g^{\pm}_{n+1}(\lambda)=\chi_n(\lambda)\pm\frac{\alpha\sqrt{\chi_n(\lambda)}}{2n}+\sqrt{\chi_n(\lambda)}r^{(6)}(\lambda)
\end{equation}
with $\{\sup_{\lambda\in \overline\Omega_0}\{|r^{(3)}_n(\lambda)|,|r^{(4)}_n(\lambda)|,|r^{(5)}_n(\lambda)|,|r^{(6)}_n(\lambda)|\}\}_{n=N_0}^{\infty}\in l^1$ for sufficiently large $N_0$, from which \eqref{condition summability} follows. From \eqref{c n general}, \eqref{equality g- - g+} and \eqref{chi} we have
\begin{equation}\label{}
    \left\{\sup_{\lambda\in \overline\Omega_0}\left|\frac{c_n(\lambda)-\chi_n(\lambda)}{g_{n+1}^-(\lambda)-g_{n+1}^+(\lambda)}\right|\right\}_{n=N_0}^{\infty}\in l^1.
\end{equation}
Therefore observing the second term of the system \eqref{system y} we can write it in the form
\begin{equation}\label{system y with R}
    \vec y_{n+1}=
     \left(
    \begin{pmatrix}
      1+g_n^+(\lambda) & 0 \\
      0 & 1+g_n^-(\lambda) \\
    \end{pmatrix}
    +R_n(\lambda)\right)
    \vec y_n, \quad n\geqslant N_0,
\end{equation}
where $\{\sup_{\lambda\in \overline\Omega_0}\|R_n(\lambda)\|\}_{n=N_0}^{\infty}\in l^1$. Clearly, $c_n$ and $g^{\pm}_n$ are analytic in $\Omega$ and continuous in $\overline\Omega_0$ for all $n\geqslant N_0$. Now the tools used to prove the following lemma can be employed to show existence of the solution $y^-_n(\lambda)$ analytic in $\Omega_0$, continuous in $\overline\Omega_0$ (for all $n\geqslant N_0$) and with asymptotics uniform in $\overline\Omega_0$
\begin{equation}\label{solution y-}
    \vec y_n^{\,-}(\lambda)=\left(\prod_{l=N_0}^{n-1}(1+g_l^-(\lambda))\right)(\vec e_-+o(1)),\quad n\to\infty.
\end{equation}

    \begin{lemma}\label{lemma asymptotics}
    Let the sequence $\{\lambda_n\}_{n=1}^{\infty}$ of nonzero complex numbers be such that for some $C>0$ and any $p,q\in\mathbb N$ such that $p\leqslant q$,
    \begin{equation}\label{condition Levinson 1}
        \prod\limits_{l=p}^q|\lambda_l|\geqslant\frac1C.
    \end{equation}
    Let the sequence $\{R_n\}_{n=1}^{\infty}$ of complex $2\times2$ matrices be such that
    \begin{equation*}
        \det\left(
        \begin{pmatrix}
        \lambda_n & 0 \\
        0 & \frac1{\lambda_n} \\
        \end{pmatrix}
        +R_n\right)\neq0,\quad n\geqslant1,
    \end{equation*}
    and
    \begin{equation}\label{condition Levinson 2}
        \sum_{k=1}^{\infty}|\lambda_k|\|R_k\|<\infty.
    \end{equation}
    Then the system
    \begin{equation}\label{system x}
        \vec x_{n+1}=\left(
        \begin{pmatrix}
        \lambda_n & 0 \\
        0 & \frac1{\lambda_n} \\
        \end{pmatrix}
        +R_n\right)\vec x_n,\ n\geqslant1,
    \end{equation}
    has the solution
    \begin{equation}\label{solution x- lemma}
      \vec x^{\,-}_n=\left(\prod_{l=1}^{n-1}\frac1{\lambda_l}\right)(\vec e_-+o(1)),\quad n\to\infty,
    \end{equation}
    where $\vec e_-=
    \begin{pmatrix}
    0
    \\
    1
    \end{pmatrix}$.
    \end{lemma}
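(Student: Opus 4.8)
The statement is a discrete Levinson-type theorem, so the natural approach is to convert the linear system \eqref{system x} into a fixed-point (summation) equation for the sought solution and solve it by a contraction / successive-approximation argument. First I would introduce the normalized unknown $\vec z_n := \left(\prod_{l=1}^{n-1}\lambda_l\right)\vec x_n$, so that $\vec z_n$ should converge to $\vec e_-$; in these variables the system becomes
\begin{equation*}
\vec z_{n+1}=\begin{pmatrix}\lambda_n^2 & 0\\ 0 & 1\end{pmatrix}\vec z_n+\lambda_n\widetilde R_n\vec z_n
\end{equation*}
for a suitably conjugated $\widetilde R_n$ with $\|\widetilde R_n\|\lesssim\|R_n\|$ (the diagonal conjugation $\mathrm{diag}(\prod\lambda_l,\prod\lambda_l)$ doesn't change norms). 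The point of the hypothesis \eqref{condition Levinson 1} is precisely that the ``growing'' diagonal entry $\prod\lambda_l^2$ is, in the relevant quotients $\prod_{l=k}^{n}\lambda_l^{-2}$, bounded — i.e. the second coordinate $\vec e_-$ direction is the genuinely recessive (or at worst marginally dominant in the wrong direction) one, controlled by $C$.

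**Key steps.** (1) Write the Green's-function / variation-of-parameters representation: a solution of the homogeneous diagonal part is $\mathrm{diag}\!\left(\prod_{l=1}^{n-1}\lambda_l,\ \prod_{l=1}^{n-1}\lambda_l^{-1}\right)$, and I would split the perturbation sum so that for the first (dominant, $\lambda$-growing) component one sums the perturbation forward from $n$ to $\infty$, while for the second component ($\vec e_-$, the target) one sums backward from $1$ (or from the base index) up to $n-1$, contributing the constant $\vec e_-$ plus a tail. Concretely, seek $\vec z_n$ solving
\begin{equation*}
\vec z_n=\vec e_- + \sum_{k} G(n,k)\,\lambda_k\widetilde R_k\vec z_k
\end{equation*}
with the kernel $G(n,k)$ built from the two diagonal fundamental solutions, choosing the summation direction in each component to keep $|G(n,k)|$ bounded using \eqref{condition Levinson 1}: in the $(1,1)$-entry a factor $\prod_{l=n}^{k-1}\lambda_l^{2}$ appears for $k\ge n$, whose modulus is $\le$ (const) only after pairing with the decaying weight — here is where one actually uses that $\prod|\lambda_l|\ge 1/C$ gives $\prod_{l=p}^q|\lambda_l|^{-1}\le C$, hence the $(2,2)$-direction kernel entry $\prod_{l=k}^{n-1}\lambda_l^{-1}$ for $k\le n$ is bounded by $C$, and symmetrically the other direction. (2) Run the contraction in the Banach space $\ell^\infty$ of bounded sequences with the weighted sup-norm $\sup_n\|\vec z_n\|$; the operator norm of $\vec z\mapsto\sum G(n,k)\lambda_k\widetilde R_k\vec z_k$ is $\le (\text{const})\sum_{k\ge N}|\lambda_k|\|R_k\|$, which by \eqref{condition Levinson 2} is $<1$ once we start the iteration from a large enough index $N$ (and then propagate the solution backward by the recurrence to all $n\ge 1$). (3) Finally, read off the asymptotics: the tail sum $\sum_{k\ge n}$ (resp. the discrepancy from $\vec e_-$) tends to $0$ as $n\to\infty$ by dominated convergence for series, giving $\vec z_n=\vec e_-+o(1)$, i.e. \eqref{solution x- lemma}.

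**Main obstacle.** The delicate point is the \emph{bookkeeping of the kernel} $G(n,k)$: one must split the perturbation series into the ``dominant'' and ``subdominant'' parts so that in every term the product of $\lambda$'s that survives is bounded uniformly by a constant depending only on $C$ — this is exactly where hypothesis \eqref{condition Levinson 1} (a two-sided, index-uniform bound, not merely $\prod_{l=1}^n|\lambda_l|\ge 1/C$) is used, and it is easy to get the direction of one of the two summations wrong and end up with an unbounded kernel. A secondary technical nuisance is that $\lambda_n$ may be large, so the perturbation is $\lambda_n R_n$ rather than $R_n$; one has to make sure the weight $|\lambda_k|$ sits with $\|R_k\|$ in \eqref{condition Levinson 2} and not elsewhere, which is consistent with the chosen kernel because each $G(n,k)$ entry has been arranged to absorb exactly one reciprocal power of the $\lambda$'s. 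Once the kernel estimate $\|G(n,k)\lambda_k\|\le (\text{const}_C)\,|\lambda_k|$ is in place, the rest is the standard Benzaid--Harris--Lutz contraction argument and poses no difficulty; I would cite \cite{Benzaid-Lutz-1987,Coddington-Levinson-1955} for the pattern and carry out only the kernel bound in detail.
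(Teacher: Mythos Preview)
Your overall strategy---normalize by $\vec z_n=\bigl(\prod_{l=1}^{n-1}\lambda_l\bigr)\vec x_n$, rewrite the system as a summation equation with inhomogeneity $\vec e_-$, and solve by a fixed-point argument in $\ell^\infty$---is exactly the paper's. But the kernel bookkeeping you sketch is muddled in a way that matters.

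The splitting you propose (first component summed forward, second component summed backward from $1$ to $n-1$) is unnecessary and, as stated, wrong. The paper sums \emph{both} components forward from $n$ to $\infty$:
\begin{equation*}
\vec z_n=\vec e_--\sum_{k=n}^{\infty}\begin{pmatrix}\prod_{l=n}^{k}\lambda_l^{-2}&0\\0&1\end{pmatrix}\lambda_kR_k\,\vec z_k.
\end{equation*}
The $(1,1)$ kernel entry is $\prod_{l=n}^k\lambda_l^{-2}$ (not $\prod\lambda_l^{2}$ as you wrote), and it is precisely condition \eqref{condition Levinson 1} that bounds its modulus by $C^2$; the $(2,2)$ entry is simply $1$. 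No backward sum appears anywhere: both diagonal kernel entries are bounded uniformly in $n\le k$, and the tail $\sum_{k\ge n}$ vanishes as $n\to\infty$, giving $\vec z_n\to\vec e_-$ directly. If you sum the second component backward over $k<n$, that contribution does \emph{not} vanish as $n\to\infty$, so you will not land on $\vec e_-$ without an implicit renormalization that just reproduces the forward sum. The paper's Remark immediately after the lemma stresses exactly this: because only the recessive solution is sought, no dichotomy splitting is needed---and this is what later yields uniformity and continuity in $\lambda$.

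A second, minor difference: the paper does not shrink to a tail $[N,\infty)$ to force a contraction. Since the kernel is Volterra (triangular in $k\ge n$), the Neumann series converges for the full index range via the factorial estimate $\|V^j\|\le(\text{const})^j/j!$, so the ``large enough $N$ then propagate backward'' step is not required.
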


\begin{proof}
One can check that solutions of the system \eqref{system x} are the same as solutions of the integral equations
\begin{equation}\label{}
    \vec x_n=
    \begin{pmatrix}
    \prod\limits_{l=1}^{n-1}\lambda_l & 0
    \\
    0 & \prod\limits_{l=1}^{n-1}\frac1{\lambda_l}
    \end{pmatrix}
    \vec f
    -\sum_{k=n}^{\infty}
    \begin{pmatrix}
    \prod\limits_{l=n}^{k}\frac1{\lambda_l} & 0
    \\
    0 & \prod\limits_{l=n}^{k}\lambda_l
    \end{pmatrix}
    R_k\vec x_k
\end{equation}
for arbitrary $\vec f\in\mathbb C^2$ (this is a kind of variation of parameters method). In particular, consider $\vec f=\vec e_-$ and
\begin{equation}\label{equation integral for x-}
    \vec x^{\,-}_n=
    \left(\prod\limits_{l=1}^{n-1}\frac1{\lambda_l}\right)
    \vec e_-
    -\sum_{k=n}^{\infty}
    \begin{pmatrix}
    \prod\limits_{l=n}^{k}\frac1{\lambda_l} & 0
    \\
    0 & \prod\limits_{l=n}^{k}\lambda_l
    \end{pmatrix}
    R_k\vec x^{\,-}_k.
\end{equation}
Then for the new sequence of vectors
\begin{equation}\label{}
  \vec{\tilde x}^{-}_n:=\left(\prod\limits_{l=1}^{n-1}\lambda_l\right)\vec x^{\,-}_n
\end{equation}
the equation \eqref{equation integral for x-} is equivalent to the equation
\begin{equation}\label{equation integral for tilde x-}
    \vec{\tilde x}^{-}_n=
    \vec e_-
    -\sum_{k=n}^{\infty}
    \begin{pmatrix}
    \prod\limits_{l=n}^{k}\frac1{\lambda_l^2} & 0
    \\
    0 & 1
    \end{pmatrix}
    \lambda_kR_k\vec{\tilde x}^{-}_k.
\end{equation}
Denote the family of $2\times2$ matrices
\begin{equation}\label{}
    V_{nk}:=-
    \begin{pmatrix}
    \prod\limits_{l=n}^{k}\frac1{\lambda_l^2} & 0
    \\
    0 & 1
    \end{pmatrix}
    \lambda_kR_k,\quad n,k\geqslant1.
\end{equation}
Since
\begin{equation}\label{estimates Volterra}
  \|V_{nk}\|\leqslant(C^2+1)|\lambda_k|\|R_k\|
\end{equation}
is summable in $k$ by the conditions \eqref{condition Levinson 1} and \eqref{condition Levinson 2}, the equation \eqref{equation integral for tilde x-} can be written as
\begin{equation}\label{}
  \vec{\tilde x}^{-}=\vec e_-+V\vec{\tilde x}^{-}
\end{equation}
with the Volterra operator $V$  in the Banach space $l^{\infty}(\mathbb N;\mathbb C^2)$ defined by the matrix-valued kernel $V_{nk}$. One has
\begin{equation}\label{}
    \|V^j\|\leqslant\frac{\left(\sum_{k=1}^{\infty}(C^2+1)|\lambda_k|\|R_k\|\right)^j}{j!},\quad j\geqslant0,
\end{equation}
\begin{equation}
    \|(I-V)^{-1}\|\leqslant\exp\left(\sum_{k=1}^{\infty}(C^2+1)|\lambda_k|\|R_k\|\right),
\end{equation}
\begin{equation}\label{solution tilde x- as series}
    \vec{\tilde x}^{-}=(I-V)^{-1}\vec e_-=\sum_{j=0}^{\infty}V^j\vec e_-.
\end{equation}
From \eqref{equation integral for tilde x-} and \eqref{estimates Volterra} it follows that $\vec{\tilde x}^{-}_n\to e_-$, $n\to\infty$, and from this we have \eqref{solution x- lemma}, which completes the proof.
\end{proof}

\begin{remark}
This is another variation of the discrete asymptotic Levinson theorem, see \cite[Lemma 2.1]{Benzaid-Lutz-1987}. However, note that the proof of existence of the ``small'' solution does not require the dichotomy condition. This is crucial for uniformity of  asymptotics and for continuity of solutions in the parameter $\lambda$ in what follows. Other formulations of smooth and uniform discrete Levinson theorems, as in \cite{Silva-2007}, do not yield the result we need.
\end{remark}

Consider the following transformation $\vec y_n\to\vec x_n$:
\begin{equation}\label{transformation y to x}
    \vec y_n=\left(\prod_{l=N_0}^{n-1}\sqrt{1+g^+_l(\lambda)}\sqrt{1+g^-_l(\lambda)}\right)\vec x_n,\quad n\geqslant N_0.
\end{equation}
For sufficiently large $N_0$ we have $|g^{\pm}_n(\lambda)|\leqslant\frac12$ for $\lambda\in\overline\Omega$, $n\geqslant N_0$, and we can take the principal branch for both square roots. This substitution transforms \eqref{system y with R} to the system
\begin{equation}\label{}
    \vec x_{n+1}=
    \left(
    \begin{pmatrix}
\frac{\sqrt{1+g^+_n}}{\sqrt{1+g^-_n}} & 0
    \\
    0 &\frac{\sqrt{1+g^-_n}}{\sqrt{1+g^+_n}}
    \end{pmatrix}
    +
    \frac{R_n}{\sqrt{1+g^+_n}\sqrt{1+g^-_n}}
    \right)
    \vec x_n,\quad n\geqslant N_0,
\end{equation}
to which Lemma \ref{lemma asymptotics} is applicable in $\overline\Omega_0$, if $N_0$ is chosen large enough (shifting the index by $N_0-1$ does not change the situation). By the lemma there exists a solution
\begin{equation}\label{solution x-}
    \vec x^{\,-}_n(\lambda)=\left(\prod_{l=N_0}^{n-1}\frac{\sqrt{1+g^+_l(\lambda)}}{\sqrt{1+g^-_l(\lambda)}}\right)(\vec e_-+o(1)),\quad n\to\infty,
\end{equation}
which together with \eqref{transformation y to x} gives \eqref{solution y-}. The constant $C$ from \eqref{condition Levinson 1} can be chosen equal to one for every $\lambda\in\overline\Omega_0$: due to \eqref{Re chi} we have $\left|\frac{1+\sqrt{\chi_n(\lambda)}+\frac{\alpha}{4n}}{1-\sqrt{\chi_n(\lambda)}+\frac{\alpha}{4n}}\right|\geqslant1$ for all $\lambda\in\overline\Omega_0$ and $n\geqslant N_0$. Besides that we have
\begin{equation}\label{}
    \left\{\sup_{\lambda\in \overline\Omega_0}\frac{\|R_n(\lambda)\|}{|1+g^-_n(\lambda)|}\right\}_{n=N_0}^{\infty}\in l^1,
\end{equation}
since $|g^-_n(\lambda)|\leqslant\frac12$ for $\lambda\in\overline\Omega$, $n\geqslant N_0$, which ensures that the estimate of the sum in \eqref{equation integral for tilde x-} provided by \eqref{estimates Volterra} is uniform in $\lambda\in\overline\Omega_0$, and hence the asymptotics in \eqref{solution x-} and \eqref{solution y-} are uniform. Furthermore, the sum in \eqref{solution tilde x- as series} converges absolutely and uniformly (i.e., $\sum_{j=0}^{\infty}\sup_{\lambda\in\overline\Omega_0}\|V^j(\lambda)\vec e_-\|<\infty$), the summands are analytic in $\Omega_0$ and continuous in $\overline\Omega_0$, thus the solutions $\vec x^{\,-}(\lambda)$ and $\vec y^{\,-}(\lambda)$ are also analytic in $\Omega_0$ and continuous in $\overline\Omega_0$. Returning to the system \eqref{system u} with \eqref{transformation w to y}, \eqref{transformation v to w}, \eqref{transformation u to v} we obtain the solution of the system \eqref{system u} analytic in $\Omega_0$ and continuous in $\overline\Omega_0$ with asymptotics as $n\to\infty$ uniform in $\overline\Omega_0$, the new sequence of vectors
\begin{multline}\label{}
    \vec{\tilde u}^{\,-}_n(\lambda):=S_n(\lambda)
    \begin{pmatrix}
    1 & 0 \\
    1 & 1 \\
    \end{pmatrix}
    \begin{pmatrix}
    1 & 1 \\
    g_n^+(\lambda) & g_n^-(\lambda) \\
    \end{pmatrix}
    \vec y_n^{\,-}(\lambda)
    =
    S_n(\lambda)
    \begin{pmatrix}
    1 & 0 \\
    1 & 1 \\
    \end{pmatrix}
    \begin{pmatrix}
    1 & 1 \\
    g_n^+(\lambda) & g_n^-(\lambda) \\
    \end{pmatrix}
    \\
    \times\left(\prod_{l=N_0}^{n-1}\sqrt{1+g^+_l(\lambda)}\sqrt{1+g^-_l(\lambda)}\right)\vec x^{\,-}_n
    =\left(\prod_{l=N_0}^{n-1}h_l^-(\lambda)\right)
    \begin{pmatrix}
    1 & 1 \\
    h_n^+(\lambda) & h_n^-(\lambda) \\
    \end{pmatrix}
    (\vec e_-+o(1)),
\end{multline}
where
\begin{equation}\label{h-n}
    h_n^{\pm}(\lambda):=d_{n-1}(\lambda)(1+g^{\pm}_n(\lambda)).
\end{equation}

As one can see,
\begin{equation}\label{asymptotics d}
    d_n(\lambda)=1+\frac{\lambda}{2n^{\alpha}}+\frac{r^{(7)}_n(\lambda)}{n^{\frac{\alpha}2}}\quad\text{with}\quad\left\{\sup_{\lambda\in \overline\Omega_0}|r^{(7)}_n(\lambda)|\right\}_{n=1}^{\infty}\in l^1.
\end{equation}
Using this formula and the definitions \eqref{g}, \eqref{chi} and \eqref{eta-} we get
\begin{equation}\label{}
    h^-_n(\lambda)=\eta^-_n(\lambda)+r^{(8)}_n(\lambda)\quad\text{with}\quad\left\{\sup_{\lambda\in \overline\Omega_0}|r^{(8)}_n(\lambda)|\right\}_{n=N_0}^{\infty}\in l^1.
\end{equation}
By the choice of $N_0$ we can ensure that for all  for $\lambda\in\overline\Omega$, $n\geqslant N_0$ we have $|\eta^-_n(\lambda)|\geqslant\frac12$. Then for every $n\geqslant N_0$
\begin{equation}\label{}
    \frac{h^-_n(\lambda)}{\eta^-_n(\lambda)}
    =
    1+\frac{r^{(8)}_n(\lambda)}{\eta^-_n(\lambda)}
    =
    1+r^{(9)}_n(\lambda)
    \quad\text{with}\quad\left\{\sup_{\lambda\in \overline\Omega_0}|r^{(9)}_n(\lambda)|\right\}_{n=N_0}^{\infty}\in l^1.
\end{equation}
This quotient is a function analytic in $\Omega_0$ and continuous in $\overline\Omega_0$ without roots. Therefore the product
\begin{equation}\label{}
    \prod_{l=N_0}^{\infty}\frac{h^-_n(\lambda)}{\eta^-_n(\lambda)}=:C_0(\lambda)
\end{equation}
converges and has the same properties. Define the solution proportional to $\vec{\tilde u}^-_n(\lambda)$
\begin{multline}\label{u-n- with matrix}
    \vec u^{\,-}_n(\lambda)=
    \begin{pmatrix}
    u_{n-1}(\lambda)
    \\
    u_n(\lambda)
    \end{pmatrix}
    :=
    \frac{\vec{\tilde u}^-_n(\lambda)}{C_0(\lambda)}
    =
    \left(\prod_{l=N_0}^{n-1}\eta^-_l(\lambda)\right)
    \left(\prod_{l=n}^{\infty}\frac{\eta^-_l(\lambda)}{h_l^-(\lambda)}\right)
    \begin{pmatrix}
    1 & 1 \\
    h_n^+(\lambda) & h_n^-(\lambda) \\
    \end{pmatrix}
    (\vec e_-+o(1))
    \\
    =
    \left(\prod_{l=N_0}^{n-1}\eta^-_l(\lambda)\right)
    \begin{pmatrix}
    1 & 1 \\
    h_n^+(\lambda) & h_n^-(\lambda) \\
    \end{pmatrix}
    (\vec e_-+o(1)),
\end{multline}
from which \eqref{u-pm} follows. The solution $\vec u_n$ is analytic in $\Omega_0$ and continuous in $\overline\Omega_0$ for every $n\geqslant N_0$. Moreover, although this solution is initially defined for $n\geqslant N_0$, it exists for all $n\geqslant2$ (and can be formally defined also for $n=1$ with $a_0:=0$) retaining the same properties, because matrices $B_n(\lambda)$ and $B^{-1}_n(\lambda)$ are entire in $\lambda$ for all $n$.

Note that the form of asymptotics
$\left(\prod_{l=N_0}^{n-1}\eta^-_l(\lambda)\right)
\begin{pmatrix}
1 & 1 \\
h_n^+(\lambda) & h_n^-(\lambda) \\
\end{pmatrix}
(\vec e_-+o(1))$
implies, but is not equivalent to
$\left(\prod_{l=N_0}^{n-1}\eta^-_l(\lambda)\right)
\begin{pmatrix}
1+o(1)\\
1+o(1)
\end{pmatrix}$,
which would be enough to get \eqref{u-pm}. It contains more information which is lost due to degeneracy of the matrix
$\begin{pmatrix}
1&1\\
1&1
\end{pmatrix}
=\lim\limits_{n\to\infty}
\begin{pmatrix}
1 & 1 \\
h_n^+(\lambda) & h_n^-(\lambda) \\
\end{pmatrix}$.
We will employ this more refined form of asymptotics below, in \eqref{calculation Wronskian}.

Throughout this proof we have many times declared that $N_0$ should be chosen sufficiently large so that one one or another property holds true. Evidently, one can choose $N_0$ so that all of them hold at once. This choice is determined only by the values of $r$ and $R$.

For $\lambda\in[-R,-r]$ and large $n$
\begin{equation}\label{}
    |\eta^-_n(\lambda)|^2=\left(1+\frac{\lambda}{2n^{\alpha}}+\frac{\alpha}{4n}\right)^2
    -\frac{\lambda}{n^{\alpha}}-\frac{\lambda^2}{4n^{2\alpha}}-\frac{\alpha}n
    =1-\frac{\alpha}{2n}+\frac{\alpha\lambda}{4n^{1+\alpha}}+\frac{\alpha^2}{16n^2}.
\end{equation}
Thus $n^{\frac{\alpha}2}\prod_{n=N_0}^n|\eta^-_n(\lambda)|^2$ has a finite limit as $n\to\infty$ which is a continuous function of $\lambda$ without roots in $[-R,-r]$. Hence the definition \eqref{H} is correct.

Since the eigenvector equation \eqref{eigenvector equation thm} has real coefficients, the sequence $u^+_n(\lambda):=\overline{u^-_n(\lambda)}$ is its solution for $\lambda\in[-R,-r]$. The sequence
\begin{equation}\label{}
    \vec u^{\,+}_n(\lambda):=
    \begin{pmatrix}
    u^+_{n-1}(\lambda)
    \\
    u^+_n(\lambda)
    \end{pmatrix}
    =\left(\prod_{l=N_0}^{n-1}\overline{\eta^-_l(\lambda)}\right)
    \begin{pmatrix}
    1 & 1
    \\
    \overline{h^+_n(\lambda)} & \overline{h^-_n(\lambda)}
    \end{pmatrix}
    (\vec e_-+o(1))
\end{equation}
is a solution to the system \eqref{system u}. For $\lambda\in[-R,-r]$ and large $n$ one has $h^+_n(\lambda)=\overline{h^-_n(\lambda)}$, hence
\begin{equation}\label{u-n+ with matrix}
    \vec u^{\,+}_n(\lambda)
    =\left(\prod_{l=N_0}^{n-1}\overline{\eta^-_l(\lambda)}\right)
    \begin{pmatrix}
    1 & 1
    \\
    h^-_n(\lambda) & h^+_n(\lambda)
    \end{pmatrix}
    (\vec e_-+o(1))
    =\left(\prod_{l=N_0}^{n-1}\overline{\eta^-_l(\lambda)}\right)
    \begin{pmatrix}
    1 & 1
    \\
    h^+_n(\lambda) & h^-_n(\lambda)
    \end{pmatrix}
    (\vec e_++o(1)),
\end{equation}
where $\vec e_+:=
\begin{pmatrix}
1
\\
0
\end{pmatrix}$. The Wronskian of the solutions $u^+(\lambda)$ and $u^-(\lambda)$ is equal for fixed $\lambda\in[-R,-r]$ for all $n\in\mathbb N$ to
\begin{multline}\label{calculation Wronskian}
    W\{u^+(\lambda),u^-(\lambda)\}=a_n(u^+_n(\lambda)u^-_{n+1}(\lambda)-u^+_{n+1}(\lambda)u^-_n(\lambda))
    =a_n\det
    \begin{pmatrix}
    u^+_n(\lambda) & u^-_n(\lambda) \\
    u^+_{n+1}(\lambda) & u^-_{n+1}(\lambda) \\
    \end{pmatrix}
    \\
    =
    \lim_{n\to\infty}a_n\det
    \left(
    \begin{pmatrix}
    1 & 1 \\
    h_{n+1}^+(\lambda) & h_{n+1}^-(\lambda) \\
    \end{pmatrix}
    (I+o(1))
    \prod_{l=N_0}^n
    \begin{pmatrix}
    \overline{\eta^-_l(\lambda)}  & 0 \\
    0 & \eta^-_l(\lambda)
    \end{pmatrix}
    \right)
    \\
    =\lim_{n\to\infty}a_n(h^-_{n+1}(\lambda)-h^+_{n+1}(\lambda))\left(\prod_{l=N_0}^n|\eta^-_l(\lambda)|^2\right)(1+o(1))
    \\
    =-2\lim_{n\to\infty}a_nd_n(\lambda)\sqrt{\chi_{n+1}(\lambda)}\left(\prod_{l=N_0}^n|\eta^-_l(\lambda)|^2\right)(1+o(1))=-2i\sqrt{-\lambda}H^2(\lambda)
\end{multline}
where we used \eqref{h-n}, \eqref{u-n- with matrix} and \eqref{u-n+ with matrix} to get the third equality, \eqref{h-n} and \eqref{g} to get the fifth and \eqref{entries}, \eqref{asymptotics d}, \eqref{chi} and \eqref{H} for the last equality. Hence $u^+(\lambda)$ and $u^-(\lambda)$ are linearly independent for $\lambda\in
[-R,-r]$ and orthogonal polynomials of the first kind can be expressed as
\begin{equation}\label{asymptotics of P beta}
    P_n(\lambda)=\Psi(\lambda)u_n^+(\lambda)+\overline{\Psi(\lambda)}u_n^-(\lambda)
\end{equation}
with
\begin{equation}
    \Psi(\lambda)=\frac{W\{P(\lambda),u^-(\lambda)\}}{W\{u^+(\lambda),u^-(\lambda)\}}=\frac{a_0(P_0(\lambda)u^-_1(\lambda)-P_1(\lambda)u^-_0(\lambda))}{-2i\sqrt{-\lambda}H^2(\lambda)}=\frac{u^-_0(\lambda)}{2i\sqrt{-\lambda}H^2(\lambda)},
\end{equation}
 where $P(\lambda):=\{P_n(\lambda)\}_{n=1}^{\infty}$ and $P_0(\lambda)$ is the formal solution of the eigenvector equation with $a_0=1$, i.e., $P_0(\lambda)\equiv0$; one can easily check that constancy of the Wronskian can be extended to $\mathbb N\cup\{0\}$. From this we see that $\Psi$ and $u^-_0$ cannot have zeros on $[-R,-r]$.

Using Proposition \ref{prop weak limit} we come to establishing the limit uniform in $[-R,-r]$ of
\begin{equation}\label{rho n}
    \rho'_n(\lambda)=\frac{\sqrt{1-d_n^2(\lambda)}}{\pi a_n|P_{n+1}(\lambda)-z_n(\lambda)P_n(\lambda)|^2}\text{ for a.a. }\lambda\in(b_n-2a_n,b_n+2a_n),
\end{equation}
where
\begin{equation}
    z_n(\lambda)=d_n(\lambda)-i\sqrt{1-d_n^2(\lambda)},
\end{equation}
according to \eqref{density of J-N} and \eqref{z-N}. Firstly, from \eqref{asymptotics d} we have
\begin{equation}\label{}
    \sqrt{1-d_n^2(\lambda)}=\frac{\sqrt{-\lambda}}{n^{\frac{\alpha}2}}\sqrt{1+\frac2{\lambda}n^{\frac{\alpha}2}r^{(7)}_n(\lambda)+o(1)}.
\end{equation}
The term $\frac2{\lambda}n^{\frac{\alpha}2}r^{(7)}_n(\lambda)$ is not vanishing in general. Moreover, it may behave in an irregular way under our condition \eqref{p-q}. However, it can be made vanishing on a proper subsequence. Since we are looking for uniform convergence, this subsequence should not depend on $\lambda$. Let $\varkappa_n:=\max\left\{\frac{|p_n|}{n^{\frac{\alpha}2}},\frac{|q_n|}{n^{\frac{\alpha}2}}\right\}$, $\{\varkappa_n\}_{n=1}^{\infty}\in l^1$ by the condition \eqref{p-q}. One can choose an increasing sequence $\{n_k\}_{k=1}^{\infty}$ such that $\varkappa_{n_k}=o\left(\frac1{n_k^{\alpha}}\right)$, $k\to\infty$. This means that $p_{n_k},q_{n_k}=o\left(\frac1{n_k^{\frac{\alpha}2}}\right)$. Then
\begin{equation}\label{}
    d_{n_k}(\lambda)=\frac{\lambda+2n_k^{\alpha}-q_{n_k}}{2n_k^{\alpha}+2p_{n_k}}=1+\frac{\lambda}{2n_k^{\alpha}}+o\left(\frac1{n_k^{\frac{3\alpha}2}}\right),
\end{equation}
\begin{equation}\label{sqrt 1-d^2}
    \sqrt{1-d_{n_k}^2(\lambda)}=\frac{\sqrt{-\lambda}}{n_k^{\frac{\alpha}2}}\left(1+o\left(\frac1{n_k^{\frac{\alpha}2}}\right)\right),
\end{equation}
\begin{equation}\label{z-n-k}
    z_{n_k}(\lambda)=1-\frac{i\sqrt{-\lambda}}{n_k^{\frac{\alpha}2}}+o\left(\frac1{n_k^{\frac{\alpha}2}}\right),
\end{equation}
and
\begin{equation}\label{h-n-k}
    h^{\pm}_{n_k+1}(\lambda)=1\pm\frac{i\sqrt{-\lambda}}{n_k^{\frac{\alpha}2}}+o\left(\frac1{n_k^{\frac{\alpha}2}}\right).
\end{equation}
The error $o$-terms are uniform in $\lambda\in[-R,-r]$. This also gives $b_{n_k}+2a_{n_k}\to0$ as $k\to\infty$ (while for $b_n-2a_n\to-\infty$ as $n\to\infty$ we do not need a subsequence). Thus we can find $K\in\mathbb N$ such that for $k\geqslant K$ the inclusion $[-R,-r]\subset(b_{n_k}-2a_{n_k},b_{n_k}+2a_{n_k})$ holds and \eqref{rho n} is true for a.a. $\lambda\in[-R,-r]$ and $n=n_k$. Further, on $[-R,-r]$ we have
\begin{equation}\label{}
    P_{n+1}-z_nP_n=\Psi(u^+_{n+1}-z_nu^+_n)+\overline{\Psi}(u^-_{n+1}-z_nu^-_n).
\end{equation}
Using \eqref{u-n+ with matrix} we get
\begin{multline}\label{}
    u^+_{n+1}-z_nu^+_n
    =
    \begin{pmatrix}
    -z_n & 1 \\
    \end{pmatrix}
    \begin{pmatrix}
    u^+_n
    \\
    u^+_{n+1}
    \end{pmatrix}
    \\
    =
    \begin{pmatrix}
    -z_n & 1 \\
    \end{pmatrix}
    \left(\prod_{l=N_0}^n\overline{\eta^-_l}\right)
    \begin{pmatrix}
    1 & 1
    \\
    h^+_{n+1} & h^-_{n+1}
    \end{pmatrix}
    (\vec e_++o(1))
    \\
    =\left(\prod_{l=N_0}^n\overline{\eta^-_l}\right)
    \begin{pmatrix}
    h^+_{n+1}-z_n& h^-_{n+1}-z_n
    \end{pmatrix}
    \begin{pmatrix}
    1+o(1)
    \\
    o(1)
    \end{pmatrix}
\end{multline}
and analogously
\begin{equation}\label{}
    u^-_{n+1}-z_nu^-_n=\left(\prod_{l=N_0}^n\eta^-_l\right)\begin{pmatrix}
    h^+_{n+1}-z_n& h^-_{n+1}-z_n
    \end{pmatrix}
    \begin{pmatrix}
    o(1)
    \\
    1+o(1)
    \end{pmatrix}.
\end{equation}
From \eqref{z-n-k} and \eqref{h-n-k} we have
\begin{equation}
    u^+_{n_k+1}-z_{n_k}u^+_{n_k}=
    \left(\prod_{l=N_0}^{n_k}\overline{\eta^-_l}\right)(h^+_{n_k+1}-z_{n_k})(1+o(1)),
\end{equation}
\begin{equation}
    u^-_{n_k+1}-z_{n_k}u^-_{n_k}=
    \left(\prod_{l=N_0}^{n_k}\eta^-_l\right)o\left(\frac1{n_k^{\frac{\alpha}2}}\right),
\end{equation}
as $k\to\infty$, and with \eqref{H} this all implies that
\begin{equation}\label{}
    |u^+_{n_k+1}(\lambda)-z_{n_k}(\lambda)u^+_{n_k}(\lambda)|=\frac{2\sqrt{-\lambda}H(\lambda)+o(1)}{n_k^{\frac{\alpha}4}},
\end{equation}
\begin{equation}
    |u^-_{n_k+1}(\lambda)-z_{n_k}(\lambda)u^-_{n_k}(\lambda)|=o\left(\frac1{n_k^{\frac{\alpha}4}}\right).
\end{equation}
Therefore
\begin{equation}\label{}
    |P_{n_k+1}(\lambda)-z_{n_k}(\lambda)P_{n_k}(\lambda)|=\frac{2\sqrt{-\lambda}|\Psi(\lambda)|H(\lambda)+o(1)}{n_k^{\frac{\alpha}4}},\quad k\to\infty,
\end{equation}
uniformly in $\lambda\in[-R,-r]$.  Here we write only the asymptotics of absolute values, because, firstly, this is exactly what we need to proceed, and secondly, because its form should depend on $\alpha$ and cannot be written explicitly for all $\alpha\in(0,1)$, see Remark \ref{rem on the form of asymptotics}. Since $a_n=n^{\alpha}(1+o(1))$ due to \eqref{p-q from th}, this together with \eqref{sqrt 1-d^2} is enough to pass to the limit for the subsequence $\rho_{n_k}'$ using \eqref{rho n}:
\begin{equation}
    \rho'_{n_k}(\lambda)=\frac{\sqrt{1-d_{n_k}^2(\lambda)}}{\pi a_n|P_{n_k+1}(\lambda)-z_{n_k}(\lambda)P_{n_k}(\lambda)|^2}\to\frac1{4\pi\sqrt{-\lambda}|\Psi(\lambda)|^2H^2(\lambda)},
\end{equation}
and this limit is uniform in $\lambda\in[-R,-r]$. By Proposition \ref{prop weak limit} the spectral measure $\rho$ is absolutely continuous on $(-R,-r)$ and
\begin{equation}\label{}
    \rho'(\lambda)=\frac1{4\pi\sqrt{-\lambda}|\Psi(\lambda)|^2H^2(\lambda)}=\frac{\sqrt{-\lambda}H^2(\lambda)}{\pi|u^-_0(\lambda)|^2}\quad\text{ for a.a. }\lambda\in(-R,-r).
\end{equation}
Finally, note that $r$ can be chosen arbitrarily small and $R$ arbitrarily large to cover the whole $\mathbb R_-$.
\end{proof}

\section{Appendix A. Proof of Proposition \ref{prop density of J-N}}

Due to different numbering of matrix entries the form of the stabilized matrix $\mathcal J_N$ slightly differs from the form used in \cite{Aptekarev-Geronimo-2016}, and for this reason the formula from that work for the spectral density of the stabilized matrix is formally not directly applicable to our situation. This means that we need to provide a proof of Proposition \ref{prop density of J-N}.

\begin{proof}
 We will restrict ourselves to considering $\lambda\in\mathbb C_+\cup(b_N-2a_N,b_N+2a_N)$, because this is enough for the proof and makes it easy to avoid ambiguous or overcomplicated notations. Consider the eigenvector equation for the matrix $\mathcal J_N$. For $n\leqslant N$ it coincides with the equation for the matrix $\mathcal J$,
\begin{equation}\label{small n}
    a_{n-1}u_{n-1}+b_nu_n+a_nu_{n+1}=\lambda u_n,\quad n\leqslant N,
\end{equation}
for $n>N$ it has constant coefficients,
\begin{equation}\label{large n}
    a_Nu_{n-1}+b_Nu_n+a_Nu_{n+1}=\lambda u_n,\quad n>N.
\end{equation}
For every $\lambda\in\mathbb C_+\cup(b_N-2a_N,b_N+2a_N)$ it admits two pairs of solutions: the pair of sequences of polynomials $P_{N,n}(\lambda)$ and $Q_{N,n}(\lambda)$ of the first and of the second kind, respectively, and another pair of solutions $u^{\pm}_{N,n}(\lambda)$ which equal $z_N^{\mp n}(\lambda)$ for $n\geqslant N$ (note that $z_N(\lambda)$ and $z_N^{-1}(\lambda)$ are the characteristic roots of the equation \eqref{large n}) and are defined for $n<N$ by solving \eqref{small n} backwards. At the same time, the polynomials $P_{N,n}(\lambda)$ and $Q_{N,n}(\lambda)$ for the matrix $\mathcal J_N$ coincide with the polynomials $P_n(\lambda)$ and $Q_n(\lambda)$ for the matrix $\mathcal J$ for $n\leqslant N+1$, which can be seen from \eqref{small n} immediately. These two pairs are related by the following identities:
\begin{equation}\label{two pairs}
    P_{N,n}(\lambda)=\Phi_N(\lambda)u^+_{N,n}(\lambda)+\Phi^{(1)}_N(\lambda)u^-_{N,n}(\lambda),
    \quad
    Q_{N,n}(\lambda)=\Theta_N(\lambda)u^+_{N,n}(\lambda)+\Theta^{(1)}_N(\lambda)u^-_{N,n}(\lambda),
\end{equation}
 for all $\lambda\in\mathbb C_+\cup(b_N-2a_N,b_N+2a_N)$ and $n\geqslant1$ with some coefficients $\Phi_N(\lambda)$, $\Phi^{(1)}_N(\lambda)$, $\Theta_N(\lambda)$ and $\Theta^{(1)}_N(\lambda)$. By calculation of the Wronskian for large $n$ we have
\begin{equation}\label{}
    W\{u^+_{N,n}(\lambda),u^-_{N,n}(\lambda)\}=a_N\left(z_N(\lambda)-\frac1{z_N(\lambda)}\right),
\end{equation}
\begin{equation}\label{Phi}
    \Phi_N(\lambda)=\frac{W\{P_{N,n}(\lambda),u^-_{N,n}(\lambda)\}}{W\{u^+_{N,n}(\lambda),u^-_{N,n}(\lambda)\}}=\frac{P_N(\lambda)z_N^{N+1}(\lambda)-P_{N+1}(\lambda)z_N^N(\lambda)}{z_N(\lambda)-\frac1{z_N(\lambda)}},
\end{equation}
\begin{equation}
    \Theta_N(\lambda)=\frac{W\{Q_{N,n}(\lambda),u^-_{N,n}(\lambda)\}}{W\{u^+_{N,n}(\lambda),u^-_{N,n}(\lambda)\}}=\frac{Q_N(\lambda)z_N^{N+1}(\lambda)-Q_{N+1}(\lambda)z_N^N(\lambda)}{z_N(\lambda)-\frac1{z_N(\lambda)}},
\end{equation}
 which shows that the functions $\Phi_N$ and $\Theta_N$ are analytic in $\mathbb C_+$ and continuous in $\mathbb C_+\cup(b_N-2a_N,b_N+2a_N)$.
For $\lambda\in(b_N-2a_N,b_N+2a_N)$ and $n\geqslant N$ \eqref{two pairs} becomes
\begin{equation}\label{P,Q real interval}
    P_{N,n}(\lambda)=\frac{\Phi_N(\lambda)}{z_N^n(\lambda)}+\overline{\Phi_N(\lambda)}z_N^n(\lambda),
    \quad
    Q_{N,n}(\lambda)=\frac{\Theta_N(\lambda)}{z_N^n(\lambda)}+\overline{\Theta_N(\lambda)}z_N^n(\lambda).
\end{equation}
For $\lambda\in\mathbb C_+$ the solution
\begin{equation}\label{}
    Q_{N,n}(\lambda)+m_N(\lambda)P_{N,n}(\lambda)=\frac{\Theta_N(\lambda)+m_N(\lambda)\Phi_N(\lambda)}{z_N^n(\lambda)}+(\Theta^{(1)}_N(\lambda)+m_N(\lambda)\Phi^{(1)}_N(\lambda))z_N^n(\lambda)
\end{equation}
has to belong to $l^2$, hence $\Theta_N(\lambda)+m_N(\lambda)\Phi_N(\lambda)=0$ (recall that $|z_N(\lambda)|<1$ for $\lambda\in\mathbb C_+$), or
\begin{equation}\label{}
    m_N(\lambda)=-\frac{\Theta_N(\lambda)}{\Phi_N(\lambda)},\quad\lambda\in\mathbb C_+.
\end{equation}
This equality can be continued to $\lambda\in(b_N-2a_N,b_N+2a_N)$ where it implies \cite{Akhiezer-Glazman-1963} that
\begin{equation}\label{rho beta}
    \rho'_N(\lambda)=\frac{{\rm Im\,}m_N(\lambda)}{\pi}=\frac{\Phi_N(\lambda)\overline{\Theta_N(\lambda)}-\overline{\Phi_N(\lambda)}\Theta_N(\lambda)}{2\pi i|\Phi_N(\lambda)|^2}.
\end{equation}
Calculations of the Wronskian for $n=1$ and $n\geqslant N$ using \eqref{P,Q real interval} yield for $\lambda\in(b_N-2a_N,b_N+2a_N)$:
\begin{multline}\label{}
    1= W\{P_{N,n},Q_{N,n}\}=a_N\left(\frac{\Phi_N}{z_N^n}+\overline{\Phi_N}z_N^n\right)
    \left(\frac{\Theta_N}{z_N^{n+1}}+\overline{\Theta_N}z_N^{n+1}\right)
    \\
    -a_N\left(\frac{\Phi_N}{z_N^{n+1}}+\overline{\Phi_N}z_N^{n+1}\right)
    \left(\frac{\Theta_N}{z_N^{n}}+\overline{\Theta_N}z_N^{n}\right)
    =a_N(\Phi_N\overline{\Theta_N}-\overline{\Phi_N}\Theta_N)\left(z_N-\frac1{z_N}\right)
\end{multline}
and therefore
\begin{equation}\label{}
    \Phi_N(\lambda)\overline{\Theta_N(\lambda)}-\overline{\Phi_N(\lambda)}\Theta_N(\lambda)=\frac1{a_N\left(z_N(\lambda)-\frac1{z_N(\lambda)}\right)}.
\end{equation}
So we have
\begin{equation}\label{rho beta'}
    \rho_N'(\lambda)=\frac1{2\pi i a_N\left(z_N(\lambda)-\frac1{z_N(\lambda)}\right)|\Phi_N(\lambda)|^2},\quad\lambda\in(b_N-2a_N,b_N+2a_N).
\end{equation}
Taking the absolute value of \eqref{Phi}, we arrive at the equality
\begin{equation}\label{}
    |\Phi_N(\lambda)|
    =\frac{|P_{N+1}(\lambda)-z_N(\lambda)P_N(\lambda)|}{\left|z_N(\lambda)-\frac1{z_N(\lambda)}\right|},
    \quad\lambda\in(b_N-2a_N,b_N+2a_N).
\end{equation}
Together with the formula \eqref{z-N} this gives
\begin{equation}\label{}
    \rho_N'(\lambda)=\frac{\left|z_N(\lambda)-\frac1{z_N(\lambda)}\right|}{2\pi a_N|P_{N+1}(\lambda)-z_N(\lambda)P_N(\lambda)|^2}=\frac{\sqrt{1-\left(\frac{\lambda-b_N}{2a_N}\right)^2}}{\pi a_N|P_{N+1}(\lambda)-z_N(\lambda)P_N(\lambda)|^2},
\end{equation}
which completes the proof.
\end{proof}

\section{Appendix B. Spectral density in the non-critical case (Apte\-ka\-rev--Ge\-ro\-ni\-mo theorem revisited)}

In this appendix we consider the class of Jacobi matrices from the paper by Aptekarev and Geronimo \cite{Aptekarev-Geronimo-2016} and show how to prove their formula for the spectral density using the technique of the proof in the critical case above. This gives a somewhat new proof of the known fact. Note that the application of our technique in the non-critical case is much simpler than in the critical one.

We use the same notation for the entries of a Jacobi matrix $\{a_n\}_{n=1}^{\infty}$ and $\{b_n\}_{n=1}^{\infty}$ and impose different assumptions on them. We hope that this will not lead to misunderstanding.

    \begin{theorem}
    Let the entries $a_n>0$ and $b_n\in\mathbb R$, $n\in\mathbb N$, of the Jacobi matrix $\mathcal J$ be such that
    \begin{equation}\label{app b bounded variation}
        \left\{\frac{b_n}{a_n}\right\}_{n=1}^{\infty},
        \left\{\frac1{a_n}\right\}_{n=1}^{\infty},
        \left\{\frac{a_{n-1}}{a_n}\right\}_{n=1}^{\infty}
    \end{equation}
    are sequences of bounded variation,
    \begin{equation}\label{app b limits of entries}
        \frac{b_n}{a_n}\to2d,\quad\frac1{a_n}\to0,\quad\frac{a_{n-1}}{a_n}\to1\text{ as }
        n\to\infty
    \end{equation}
    with
    \begin{equation}\label{}
        d\in(-1,1)
    \end{equation}
    and
    \begin{equation}\label{}
        \sum_{n=1}^{\infty}\frac1{a_n}=\infty.
    \end{equation}
    Then for every $\lambda\in\overline{\mathbb C}_+$ the eigenvector equation for $\mathcal J$,
    \begin{equation}\label{app b eigenfunction equation}
        a_{n-1}u_{n-1}+b_nu_n+a_nu_{n+1}=\lambda u_n,\quad n\geqslant2,
    \end{equation}
    has a solution $u_n^-(\lambda)$ with the asymptotics
    \begin{equation}\label{app b u-}
        u_n^-(\lambda)=\left(\prod_{l=2}^n\mu_l^-(\lambda)\right)(1+o(1)),\quad n\to\infty,
    \end{equation}
    where
    \begin{equation}\label{app b mu}
        \mu^-_n(\lambda):=\frac{\lambda-b_n}{2a_n}- i\sqrt{\frac{a_{n-1}}{a_n}-\left(\frac{\lambda-b_n}{2a_n}\right)^2}\to-d-i\sqrt{1-d^2},\quad n\to\infty.
    \end{equation}
    The asymptotics \eqref{app b u-} is uniform in every compact set $K\subset\overline{\mathbb C}_+$. For any $n$ the vector component $u^-_n$ is analytic in $\mathbb C_+$ and continuous in $\overline{\mathbb C}_+$. The limit
    \begin{equation}\label{app b M}
        M(\lambda ):=\lim_{n\to\infty}\sqrt{a_n}\prod_{l=2}^n|\mu_l^-(\lambda)|,\quad\lambda\in\mathbb R,
    \end{equation}
    exists and is finite, nonzero and continuous in $\lambda\in\mathbb R$. The sequence $u^+_n(\lambda):=\overline{u^-_n(\lambda)}$ for $\lambda\in\mathbb R$ is also a solution to the equation \eqref{app b eigenfunction equation} and the Wronskian of solutions $u^{\pm}(\lambda)=\{u^{\pm}_n(\lambda)\}_{n=1}^{\infty}$
    \begin{equation}\label{app b Wronskian u+u-}
        W\{u^+(\lambda),u^-(\lambda)\}=-2i\sqrt{1-d^2}M^2(\lambda),
    \end{equation}
    therefore $u^+(\lambda)$ and $u^-(\lambda)$ are linearly independent. The orthogonal polynomials can be expressed for $\lambda\in\mathbb R$ as
    \begin{equation}\label{app b decomposition}
        P_n(\lambda)=\Psi(\lambda)u_n^+(\lambda)+\overline{\Psi(\lambda)}u_n^-(\lambda),
    \end{equation}
    where
    \begin{equation}\label{app b Psi}
        \Psi(\lambda)=\frac{u_0^-(\lambda)}{2i\sqrt{1-d^2}M^2(\lambda)},
    \end{equation}
    $u_0^-(\lambda):=(\lambda-b_1)u_1^-(\lambda)-a_1u_2^-(\lambda)$ (assuming formally in \eqref{app b eigenfunction equation} that $a_0:=1$). Finally,
    \begin{equation}\label{app b density}
        \rho'(\lambda)
        =
        \frac1{4\pi\sqrt{1-d^2}|\Psi(\lambda)|^2M^2(\lambda)}
        =
        \frac{\sqrt{1-d^2}M^2(\lambda)}{\pi|u^-_0(\lambda)|^2},\quad\lambda\in\mathbb R.
    \end{equation}
    \end{theorem}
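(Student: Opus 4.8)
The plan is to follow exactly the scheme of the proof of Theorem \ref{main theorem}, but in the much simpler non-critical setting where the limit transfer-matrix is diagonalizable. First I would reduce the system \eqref{app b eigenfunction equation} in vector form $\vec u_{n+1}=B_n(\lambda)\vec u_n$ to L-diagonal (Levinson) form. Since the limit matrix \eqref{limit matrix}-type object here has the two distinct eigenvalues $\mu^{\pm}:=-d\mp i\sqrt{1-d^2}$, a single diagonalizing transformation of bounded variation suffices: writing $d_n(\lambda):=\frac{\lambda-b_n}{2a_n}$ and $\mu^{\pm}_n(\lambda):=d_n\pm i\sqrt{\frac{a_{n-1}}{a_n}-d_n^2}$, the transfer matrix $B_n$ has eigenvalues $\mu^{\pm}_n$ (this is a straightforward characteristic-polynomial computation using $\mu^+_n\mu^-_n=\frac{a_{n-1}}{a_n}$ and $\mu^+_n+\mu^-_n=2d_n$). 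I would pass to the eigenvector basis; because the entries of $B_n$ and the eigenvalue branches are of bounded variation and converge (to the fixed diagonalizable limit) by \eqref{app b bounded variation}-\eqref{app b limits of entries}, the change-of-basis matrices $T_n(\lambda)$ are of bounded variation, $T_n-T_{n+1}$ is summable, and after the substitution $\vec u_n=T_n\vec v_n$ one gets $\vec v_{n+1}=(\mathrm{diag}(\mu^+_n,\mu^-_n)+R_n)\vec v_n$ with $\sum_n\|R_n\|<\infty$. The role of the Kooman transformation \eqref{transformation w to y}-\eqref{system y} and the auxiliary sequences $g^{\pm}_n$ in the critical case is here played by this one step; this is the promised simplification.

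Next I would apply Lemma \ref{lemma asymptotics}. After rescaling $\vec v_n=\bigl(\prod_{l=2}^{n-1}\sqrt{\mu^+_l\mu^-_l}\,\bigr)\vec x_n$ to symmetrize the diagonal, the system becomes $\vec x_{n+1}=\bigl(\mathrm{diag}(\lambda_n,1/\lambda_n)+\widetilde R_n\bigr)\vec x_n$ with $\lambda_n:=\sqrt{\mu^+_n/\mu^-_n}$; here $|\lambda_n|=1$ for real $\lambda$ (and stays bounded away from $0$ and $\infty$ uniformly on compacts of $\overline{\mathbb C}_+$ for $N_0$ large), so the product condition \eqref{condition Levinson 1} holds with a constant $C$ uniform on compacts, and $\sum|\lambda_n|\|\widetilde R_n\|<\infty$ since $\widetilde R_n$ is a bounded multiple of $R_n$. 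Lemma \ref{lemma asymptotics} then yields the ``small'' solution $\vec x^{\,-}_n=\bigl(\prod 1/\lambda_l\bigr)(\vec e_-+o(1))$, and — just as in the main proof — the Volterra series \eqref{solution tilde x- as series} converges absolutely and uniformly on compacts of $\overline{\mathbb C}_+$ with analytic-in-$\mathbb C_+$, continuous-in-$\overline{\mathbb C}_+$ summands, giving uniformity of the asymptotics and analyticity/continuity of the solution in $\lambda$. Unwinding the substitutions produces $\vec u^{\,-}_n$ with asymptotics $\bigl(\prod_{l=2}^{n-1}\mu^-_l\bigr)\bigl(\begin{smallmatrix}1\\1\end{smallmatrix}+o(1)\bigr)$ in the relevant matrix-refined form, i.e. \eqref{app b u-}; extension to all $n\ge 1$ follows because $B_n,B_n^{-1}$ are entire. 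Here one must, as in the main text, be careful to keep the refined matrix form $\begin{pmatrix}1&1\\h^+_n&h^-_n\end{pmatrix}(\vec e_-+o(1))$ rather than the cruder $\begin{pmatrix}1+o(1)\\1+o(1)\end{pmatrix}$, because the Wronskian computation relies on the off-diagonal information that the limiting degenerate matrix $\begin{pmatrix}1&1\\1&1\end{pmatrix}$ would destroy.

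Then I would compute, for real $\lambda$: $|\mu^-_n(\lambda)|^2=\mu^+_n\mu^-_n=\frac{a_{n-1}}{a_n}$, so $a_n\prod_{l=2}^n|\mu^-_l|^2$ telescopes to $a_1$ up to the convergent tail; this makes the definition \eqref{app b M} of $M(\lambda)$ correct and shows $M$ is finite, nonzero, continuous (continuity and nonvanishing come from the uniform convergence of the product on compacts of $\mathbb R$, using $\mathrm{Re}$ of the relevant square-root branch). The identity $u^+_n=\overline{u^-_n}$ for real $\lambda$ is immediate from real coefficients, and a Wronskian computation along the lines of \eqref{calculation Wronskian} — using $h^-_{n+1}-h^+_{n+1}\to\mu^--\mu^+=-2i\sqrt{1-d^2}$ and $a_n\prod|\mu^-_l|^2\to$ (limit) — gives \eqref{app b Wronskian u+u-}. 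Linear independence of $u^{\pm}$ follows, and expanding $P_n$ in this basis and computing $\Psi=W\{P,u^-\}/W\{u^+,u^-\}$ using $P_0\equiv 0$, $P_1=1$, $a_0=1$ gives \eqref{app b decomposition}-\eqref{app b Psi}. Finally, for the spectral density I would invoke Proposition \ref{prop density of J-N} and Proposition \ref{prop weak limit}: since here $a_n\to\infty$ and $d_n(\lambda)\to d$ with no irregularity (the sequences are of bounded variation, so \emph{no subsequence is needed} — this is the second simplification over the critical case), one gets $z_n(\lambda)\to -d-i\sqrt{1-d^2}=\mu^-$ and, from the refined asymptotics, $P_{n+1}-z_nP_n=\Psi(u^+_{n+1}-z_nu^+_n)+\overline{\Psi}(u^-_{n+1}-z_nu^-_n)$ with $u^-_{n+1}-z_nu^-_n=\bigl(\prod\mu^-_l\bigr)(h^-_{n+1}-z_n+o(1))\to 0$-dominant cancellation while $u^+_{n+1}-z_nu^+_n=\bigl(\prod\overline{\mu^-_l}\bigr)(h^+_{n+1}-z_n)(1+o(1))$ with $h^+_{n+1}-z_n\to\mu^+-\mu^-=2i\sqrt{1-d^2}$; hence $|P_{n+1}-z_nP_n|=2\sqrt{1-d^2}\,|\Psi|\,|\prod\mu^-_l|(1+o(1))$, so with $\sqrt{1-d_n^2}\to\sqrt{1-d^2}$ and $\sqrt{a_n}\prod|\mu^-_l|\to M$ the formula \eqref{density of J-N} passes to the limit uniformly on compacts of $\mathbb R$ and yields \eqref{app b density}.

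The main obstacle, as in the body of the paper, is not any single hard estimate but bookkeeping: making the reduction to Levinson form while tracking that \emph{all} smallness/summability constants are uniform on compact subsets of $\overline{\mathbb C}_+$ (so that Lemma \ref{lemma asymptotics} delivers uniform asymptotics and the Volterra series delivers analyticity/continuity in $\lambda$), and — the genuinely delicate point — retaining the off-diagonal-refined matrix form of the asymptotics through every substitution so that the Wronskian and the $P_{n+1}-z_nP_n$ asymptotics can actually be extracted; the degenerate limit of $\begin{pmatrix}1&1\\h^+_n&h^-_n\end{pmatrix}$ is exactly what makes a naive component-wise asymptotic insufficient. Everything else (the telescoping of $a_n\prod|\mu^-_l|^2$, the limits of $h^{\pm}_n$ and $z_n$, absence of the subsequence trick) is strictly easier than in the critical case.
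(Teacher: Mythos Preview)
Your overall strategy is correct and coincides with the paper's: diagonalize $B_n(\lambda)$ by its eigenvector matrix $\begin{pmatrix}1&1\\\mu^+_n&\mu^-_n\end{pmatrix}$, use bounded variation of the entries to get an $l^1$ remainder, rescale and apply Lemma~\ref{lemma asymptotics}, then compute the Wronskian, the decomposition of $P_n$, and pass to the limit in Proposition~\ref{prop density of J-N} via Proposition~\ref{prop weak limit} with no subsequence needed.

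However, you have imported from the critical case a difficulty that does not exist here, and this leads to one wrong formula and a misidentified ``main obstacle.'' In the non-critical case the eigenvector matrix $\begin{pmatrix}1&1\\\mu^+_n(\lambda)&\mu^-_n(\lambda)\end{pmatrix}$ tends to the \emph{invertible} matrix $\begin{pmatrix}1&1\\1/z&z\end{pmatrix}$ with $z=-d-i\sqrt{1-d^2}$, $z\neq 1/z$ since $|d|<1$; it is \emph{not} the degenerate $\begin{pmatrix}1&1\\1&1\end{pmatrix}$. Consequently the crude componentwise asymptotic
\[
\vec u^{\,-}_n=\left(\prod_{l=2}^{n-1}\mu^-_l(\lambda)\right)\left(\begin{pmatrix}1\\z\end{pmatrix}+o(1)\right)
\]
(not $\begin{pmatrix}1\\1\end{pmatrix}+o(1)$ as you wrote) already suffices for the Wronskian --- the determinant $z-1/z=-2i\sqrt{1-d^2}$ is nonzero --- and for $P_{n+1}-z_nP_n$, because $\overline{\mu^-_{n+1}}-z_n\to 1/z-z\neq 0$ is of order one while $\mu^-_{n+1}-z_n\to 0$ is simply $o(1)$. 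The refined matrix form you insist on keeping is unnecessary here; the paper does not use it in Appendix~B. Your computations of the limits $h^{\pm}_{n+1}\to\mu^{\pm}$ are in fact inconsistent with your own claim of degeneracy. Once this confusion is removed, the proof is exactly the paper's and is indeed strictly easier than the critical case. (Minor: your first display has $\mu^{\pm}:=-d\mp i\sqrt{1-d^2}$ with the wrong sign; it should be $\mu^{\pm}=-d\pm i\sqrt{1-d^2}$.)
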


    \begin{remark}
     The critical case corresponds to $d=\pm1$, and the formula \eqref{app b density} then fails. For $|d|>1$ the spectrum of $\mathcal J$ is discrete, since we are in the situation of dominating main diagonal. This can be shown by estimating quadratic forms of truncated matrices \cite{Janas-Naboko-2001,Szwarz-2002}.
    \end{remark}

    \begin{remark}
    Note that the conditions \eqref{app b bounded variation}--\eqref{app b limits of entries} are much weaker compared to the condition \eqref{p-q from th} in the critical case. Surely, it is not surprising and, moreover, the similar situation happens for discrete Schr\"odinger operator with decreasing potential at the edges of the essential spectrum, the points $\lambda=\pm2$.
    \end{remark}

\begin{proof}
From the assumption \eqref{app b limits of entries} it immediately follows that in the formula for the spectral density \eqref{density of J-N} of the stabilized matrix $\mathcal J_n$,
\begin{equation}\label{app b sieved density}
    \rho'_n(\lambda)=\frac{\sqrt{1-\left(\frac{\lambda-b_n}{2a_n}\right)^2}}{\pi a_n|P_{n+1}(\lambda)-z_n(\lambda)P_n(\lambda)|^2},
\end{equation}
the numerator converges to $\sqrt{1-d^2}$ as $n\to\infty$.  Note that the complex numbers
\begin{equation}\label{}
    \mu^{\pm}_n(\lambda):=\frac{\lambda-b_n}{2a_n}\pm i\sqrt{\frac{a_{n-1}}{a_n}-\left(\frac{\lambda-b_n}{2a_n}\right)^2},
\end{equation}
are exactly the eigenvalues of the transfer-matrix
\begin{equation}\label{}
    B_n(\lambda)=
    \begin{pmatrix}
    0 & 1 \\
    -\frac{a_{n-1}}{a_n} & \frac{\lambda-b_n}{a_n}
    \end{pmatrix}.
\end{equation}
Here the branches for $\lambda\in\overline{\mathbb C}_+$ are chosen so that $\sqrt{\frac{a_n}{a_{n-1}}}\mu^+_n(\lambda)\in\overline{\mathbb C}_+\backslash\mathbb D$ and $\sqrt{\frac{a_n}{a_{n-1}}}\mu^-_n(\lambda)\in(\overline{\mathbb C}_-\cap\overline{\mathbb D})\backslash\{0\}$, $\mathbb D$ denotes the open unit disc.
The determinant of the transfer-matrix equals
\begin{equation}\label{}
  \mu^+_n(\lambda)\mu^-_n(\lambda)=\frac{a_{n-1}}{a_n},\quad n\geqslant2.
\end{equation}
From \eqref{app b limits of entries} we also see that
\begin{equation}\label{app b limit of mu-pm}
    B_n(\lambda)\to
    \begin{pmatrix}
      0 & 1 \\
      -1 & -2d \\
    \end{pmatrix},
    \quad
    \mu^{\pm}_n(\lambda)\to-d\pm i\sqrt{1-d^2},
\end{equation}
\begin{equation}\label{}
    z_n(\lambda)\to-d-i\sqrt{1-d^2},\quad \frac1{z_n(\lambda)}\to-d+i\sqrt{1-d^2},
\end{equation}
as $n\to\infty$, and for $\lambda\in\mathbb R$ the eigenvalues of $B_n(\lambda)$ are in the elliptic case  for large $n$:
\begin{equation}\label{}
    \mu^+_n(\lambda)=\overline{\mu^-_n(\lambda)},\quad|\mu^-_n(\lambda)|=|\mu^+_n(\lambda)|=\sqrt{\frac{a_{n-1}}{a_n}}.
\end{equation}
Therefore the limit of the sequence $\lim_{n\to\infty}\sqrt{a_n}\prod_{l=2}^{n}|\mu^-_l(\lambda)|$ as $n\to\infty$ exists for every $\lambda\in\mathbb R$ and is a continuous function of $\lambda$ without zeros. Denote
\begin{equation}\label{app b z}
    z:=-d-i\sqrt{1-d^2}.
\end{equation}
At the real points
\begin{equation}\label{}
    \lambda^{\pm}_n:=b_n\pm2\sqrt{a_{n-1}a_n}
\end{equation}
we have $\mu^+_n\equiv\mu^-_n$ and the transfer-matrix $B_n(\lambda)$ is not diagonalizable, while it is diagonalizable for all other $\lambda\in\mathbb C$. Denote
\begin{equation}\label{}
    X_N:=\{\lambda^{\pm}_n,n\geqslant N\}.
\end{equation}
Fix an arbitrary compact set $K\subset\overline{\mathbb C}_+$. Since $\lambda^{\pm}_n\sim2a_n(d\pm1)$ as $n\to\infty$, for every $d\in(-1,1)$ there exists $N(K)\in\mathbb N$ such that $K\cap X_{N(K)}=\emptyset$. This means that for all $n\geqslant N(K)$ and $\lambda\in K$ one can diagonalize the matrix $B_n(\lambda)$,
\begin{equation}\label{}
    B_n(\lambda)=
    \begin{pmatrix}
      1 & 1 \\
      \mu^+_n(\lambda) & \mu^-_n(\lambda) \\
    \end{pmatrix}
    \begin{pmatrix}
      \mu^+_n(\lambda) & 0 \\
      0 & \mu^-_n(\lambda) \\
    \end{pmatrix}
    \begin{pmatrix}
      1 & 1 \\
      \mu^+_n(\lambda) & \mu^-_n(\lambda) \\
    \end{pmatrix}^{-1},\quad n\geqslant N(K).
\end{equation}
The substitution
\begin{equation}\label{app b u to v}
    \vec u_n
    =
    \begin{pmatrix}
      1 & 1 \\
      \mu^+_n(\lambda) & \mu^-_n(\lambda) \\
    \end{pmatrix}
    \vec v_n
\end{equation}
transforms for $n\geqslant N(K)$ the system
\begin{equation}\label{app b system u}
    \vec u_{n+1}=B_n(\lambda)\vec u_n\,,
\end{equation}
which is equivalent to the eigenfunction equation \eqref{app b eigenfunction equation}, to the system
\begin{multline}\label{app b system v beta}
    \vec v_{n+1}=
    \begin{pmatrix}
    1 & 1 \\
    \mu_{n+1}^+(\lambda) & \mu_{n+1}^-(\lambda)
    \end{pmatrix}^{-1}
    B_n(\lambda)
    \begin{pmatrix}
    1 & 1 \\
    \mu^+_n(\lambda) & \mu^-_n(\lambda)
    \end{pmatrix}
    \vec v_n
    \\
    =
    \begin{pmatrix}
      1 & 1 \\
      \mu_{n+1}^+(\lambda) & \mu_{n+1}^-(\lambda)
    \end{pmatrix}^{-1}
    \begin{pmatrix}
      1 & 1 \\
      \mu^+_n(\lambda) & \mu^-_n(\lambda)
    \end{pmatrix}
    \begin{pmatrix}
      \mu^+_n(\lambda) & 0 \\
      0 & \mu^-_n(\lambda)
    \end{pmatrix}
    \vec v_n,\quad n\geqslant N(K).
\end{multline}
Using the condition \eqref{app b bounded variation} one can show by an elementary calculation that
\begin{equation}\label{}
    \left\{
    \sup_{\lambda\in K}
    \left\|
    \begin{pmatrix}
      1 & 1 \\
      \mu_{n+1}^+(\lambda) & \mu_{n+1}^-(\lambda)
    \end{pmatrix}^{-1}
    \begin{pmatrix}
      1 & 1
      \\
      \mu^+_n(\lambda) & \mu^-_n(\lambda)
    \end{pmatrix}
    -I
    \right\|
    \right\}_{n=N(K)}^{\infty}\in l^1
\end{equation}
(here we essentially use that $|d|<1$). Therefore the system \eqref{app b system v beta} can be written as
\begin{equation}\label{app b system v}
    \vec v_{n+1}=
    \left(
    \begin{pmatrix}
      \mu^+_n(\lambda) & 0 \\
      0 & \mu^-_n(\lambda) \\
    \end{pmatrix}
    +R_n(\lambda)
    \right)
    \vec v_n,\quad n\geqslant N(K),
\end{equation}
 with $\{\sup_{\lambda\in K}\|R_n(\lambda)\|\}_{n=N(K)}^{\infty}\in l^1$. Consider the second substitution $\vec v_n\to\vec x_n$:
\begin{equation}\label{app b trasformation v to x}
    \vec v_n=\left(\prod_{l=N(K)}^{n-1}\sqrt{\mu^+_l(\lambda)\mu^-_l(\lambda)}\right)\vec x_n,\quad n\geqslant N(K),
\end{equation}
this transforms the system \eqref{app b system v} to the system
\begin{equation}\label{app b system x}
    \vec x_{n+1}=
    \left(
    \begin{pmatrix}
    \sqrt{\frac{\mu^+_n(\lambda)}{\mu^-_n(\lambda)}} & 0 \\
    0 & \sqrt{\frac{\mu^-_n(\lambda)}{\mu^+_n(\lambda)}}
    \end{pmatrix}
    +\frac{R_n(\lambda)}{\sqrt{\mu^+(\lambda)\mu^-(\lambda)}}
    \right)
    \vec x_n,\quad n\geqslant N(K).
\end{equation}
Lemma \ref{lemma asymptotics} is applicable to this system for $n\geqslant N(K)$: there exists $C(K)>0$ such that for every $p,q\in\mathbb N$ such that $N(K)\leqslant p\leqslant q$
\begin{equation}\label{}
    \prod_{l=p}^q\left|\sqrt{\frac{\mu^+_l(\lambda)}{\mu^-_l(\lambda)}}\right|\geqslant\frac1{C(K)}
\end{equation}
(provided by the fact that we choose $|\mu^+_n(\lambda)|\geqslant|\mu^-_n(\lambda)|$ for large $n$) and
\begin{equation}\label{}
    \left\{\sup_{\lambda\in K}\frac{\left\|R_n(\lambda)\right\|}{|\mu^-_n(\lambda)|}\right\}_{n=N(K)}^{\infty}\in l^1,
\end{equation}
which gives the condition \eqref{condition Levinson 2} of Lemma \ref{lemma asymptotics}. By the lemma there exists a solution $\vec x^{\,K,-}(\lambda)$ of the system \eqref{app b system x}
\begin{equation}\label{}
    \vec x^{\,K,-}_n(\lambda)=\left(\prod_{l=N(K)}^{n-1}\sqrt{\frac{\mu^-_l(\lambda)}{\mu^+_l(\lambda)}}\right)(\vec e_-+o(1)),\quad n\to\infty,
\end{equation}
which is analytic in ${\rm int\,}K$, continuous in $K$ and has uniform in $\lambda\in K$ asymptotics. Correspondingly the system \eqref{app b system u} has a solution
\begin{equation}\label{}
    \vec u^{\,K,-}_n(\lambda):=
    \begin{pmatrix}
    1 & 1 \\
    \mu^+_n(\lambda) & \mu^-_n(\lambda)
    \end{pmatrix}
    \left(\prod_{l=N(K)}^{n-1}\sqrt{\mu^+_l(\lambda)\mu^-_l(\lambda)}\right)\vec x^{\,K,-}_n(\lambda)
\end{equation}
with the same properties,
\begin{multline}\label{}
    \vec u^{\,K,-}_n(\lambda)
    =
    \begin{pmatrix}
    u^{K,-}_{n-1}(\lambda) \\
    u^{K,-}_n(\lambda)
    \end{pmatrix}
    =
    \begin{pmatrix}
    1 & 1 \\
    \mu^+_n(\lambda) & \mu^-_n(\lambda)
    \end{pmatrix}
    \left(\prod_{l=N(K)}^{n-1}\mu^-_l(\lambda)\right)
    \left(\vec e_-+o(1)\right)
    \\
    =\left(\prod_{l=N(K)}^{n-1}\mu^-_l(\lambda)\right)
    \left(
    \begin{pmatrix}
    1 \\
    z
    \end{pmatrix}
    +o(1)
    \right),\quad n\to\infty.
\end{multline}
This solution is formally defined only for $n\geqslant N(K)$, but obviously exists for all $n\geqslant2$ retaining its properties, because the matrices $B_n(\lambda)$ and $B^{-1}_n(\lambda)$ are entire functions of $\lambda$ for all $n$. Let us observe that the solution proportional to $\vec u^{\,K,-}_n(\lambda)$
\begin{equation}\label{}
    \vec u^{\,-}_n(\lambda):=
    \left(\prod_{l=2}^{N(K)-1}\mu^-_l(\lambda)\right)
    \vec u^{\,K,-}_n(\lambda),
\end{equation}
\begin{equation}\label{app b u- definition}
    \vec u^{\,-}_n(\lambda)=
    \begin{pmatrix}
    u^-_{n-1}(\lambda) \\
    u^-_n(\lambda)
    \end{pmatrix}
    =
    \left(\prod_{l=2}^{n-1}\mu^-_l(\lambda)\right)
    \left(
    \begin{pmatrix}
    1 \\
    z
    \end{pmatrix}
    +o(1)
    \right),\quad n\to\infty,
\end{equation}
does not depend on $K$ and hence is analytic in $\mathbb C_+$ and continuous in $\overline{\mathbb C}_+$. It is enough to show that for any compact sets $K_1,K_2$ such that $K_1\subset K_2\subset\overline{\mathbb C}_+$ for every $\lambda\in K_1$ solutions $\vec u^{\,K_1,-}(\lambda)$ and $\vec u^{\,K_2,-}(\lambda)$ are proportional (and not only have proportional asymptotics). To this end consider the Wronskian: for any $n\geqslant1$ and $\lambda\in K_1$
\begin{multline}
    W\{u^{K_1,-}(\lambda),u^{K_2,-}(\lambda)\}
    =
    a_n
    \det
    \begin{pmatrix}
    u^{K_1,-}_n(\lambda) & u^{K_2,-}_n(\lambda) \\
    u^{K_1,-}_{n+1}(\lambda) & u^{K_2,-}_{n+1}(\lambda) \\
    \end{pmatrix}
    =
    \lim_{n\to\infty}
    a_n
    \det
    \begin{pmatrix}
    u^{K_1,-}_n(\lambda) & u^{K_2,-}_n(\lambda) \\
    u^{K_1,-}_{n+1}(\lambda) & u^{K_2,-}_{n+1}(\lambda) \\
    \end{pmatrix}
    \\
    =
    \lim_{n\to\infty}
    \left(
    a_n
    \left(\prod_{l=N(K_1)}^n\mu^-_l(\lambda)\right)\left(\prod_{l=N(K_2)}^n\mu^-_l(\lambda)\right)
    \det
    \left(
    \begin{pmatrix}
    1 & 1 \\
    z & z
    \end{pmatrix}
    +o(1)
    \right)
    \right)
    =0
\end{multline}
using \eqref{app b M}. For $\lambda\in\mathbb R$ the sequence $u^+_n(\lambda):=\overline{u^-_n(\lambda)}$ is a solution of \eqref{app b system u} and, since $|z|=1$, $\overline z=\frac1z$,
\begin{multline}
    W\{u^+(\lambda),u^-(\lambda)\}
    =a_n
    \det
    \begin{pmatrix}
    u^+_n(\lambda) & u^-_n(\lambda) \\
    u^+_{n+1}(\lambda) & u^-_{n+1}(\lambda) \\
    \end{pmatrix}
    \\
    =
    \lim_{n\to\infty}a_n
    \det
    \left[
    \left(
    \begin{pmatrix}
    1 & 1 \\
    \frac1z & z
    \end{pmatrix}
    +o(1)
    \right)
    \prod_{l=2}^n
    \begin{pmatrix}
    \overline{\mu^-_l(\lambda)} & 0 \\
    0 & \mu^-_l(\lambda)
    \end{pmatrix}
    \right]
    \\
    =\lim_{n\to\infty}a_n\left(z-\frac1z+o(1)\right)\left(\prod_{l=2}^n|\mu^-_l(\lambda)|^2\right)
    =-2i\sqrt{1-d^2}M^2(\lambda).
\end{multline}
Therefore $u^+(\lambda)$ and $u^-(\lambda)$ form a basis of solutions of \eqref{app b eigenfunction equation}. We have for $\lambda\in\mathbb R$
\begin{equation}\label{app b decomposition beta}
    P_n(\lambda)=\Psi(\lambda)u_n^+(\lambda)+\overline{\Psi(\lambda)}u_n^-(\lambda)
\end{equation}
with
\begin{equation}\label{}
    \Psi(\lambda)=\frac{W\{P(\lambda),u^-(\lambda)\}}{W\{u^+(\lambda),u^-(\lambda)\}}=\frac{u^-_0(\lambda)}{2i\sqrt{1-d^2}M^2(\lambda)}.
\end{equation}
Recall that $u^-_0(\lambda)=(\lambda-b_1)u_1^-(\lambda)-a_1u_2^-(\lambda)$, then
\begin{multline}\label{}
    P_{n+1}(\lambda)-z_n(\lambda)P_n(\lambda)=\Psi(\lambda)(u^+_{n+1}(\lambda)-z_n(\lambda)u^+_n(\lambda))+\overline{\Psi(\lambda)}(u^-_{n+1}(\lambda)-z_n(\lambda)u^+_n(\lambda))
    \\
    =\Psi(\lambda)\left(\prod_{l=2}^n\overline{\mu^-_l(\lambda)}\right)(\overline{\mu^-_{n+1}(\lambda)}-z_n(\lambda)+o(1))
    +\overline{\Psi(\lambda)}\left(\prod_{l=2}^n\mu^-_l(\lambda)\right)(\mu^-_{n+1}(\lambda)-z_n(\lambda)+o(1))
    \\
    =\Psi(\lambda)\left(\prod_{l=2}^n\overline{\mu^-_l(\lambda)}\right)\left(\frac1z-z+o(1)\right),
\end{multline}
since $\mu^-_{n+1}(\lambda)-z_n(\lambda)\to0$ and $\overline{\mu^-_{n+1}(\lambda)}-z_n(\lambda)\to\frac1z-z$ as $n\to\infty$. Therefore
\begin{equation}\label{}
    \sqrt{a_n}|P_{n+1}(\lambda)-z_n(\lambda)P_n(\lambda)|\to|\Psi(\lambda)|\lim_{n\to\infty}\left(\sqrt{a_n}\prod_{l=2}^n|\mu^-_l(\lambda)|\right)\left|\frac1z-z\right|=2|\Psi(\lambda)|M(\lambda)\sqrt{1-d^2},
\end{equation}
and the denominator of \eqref{app b sieved density} converges to $4\pi|\Psi(\lambda)|^2M^2(\lambda)(1-d^2)$ as $n\to\infty$. By Proposition \ref{prop weak limit} we arrive at the formula \eqref{app b density}. This completes the proof.
\end{proof}

\section{Acknowledgements}

This work was supported by RFBR 19-01-00657A, RFBR 19-01-00565A and RFBR 17-01-00529A grants and by the Knut and Alice Wallenberg Foundation. The first author appreciates hospitality of the Mittag-Lefler Institute, where part of this work was done.

\end{document}